\documentclass[12pt]{article}

\usepackage{amsfonts}
\usepackage{amssymb}
\usepackage{amsmath}
\usepackage{xcolor}
\usepackage[all]{xy}
\usepackage[utf8]{inputenc}

\def\Mod{\hbox{-}\mathsf{Mod}}
\def\uno{\mathbf{1}}
\def\mpn{\medskip\par\noindent}
\def\dom{\backslash}

\usepackage[left=1.20in,right=1.20in]{geometry}
\definecolor{ao(english)}{rgb}{0.0, 0.5, 0.0}
\definecolor{brickred}{rgb}{0.8, 0.25, 0.33}
\definecolor{burntorange}{rgb}{0.8, 0.33, 0.0}
\definecolor{beaver}{rgb}{0.62, 0.51, 0.44}
\definecolor{brown(traditional)}{rgb}{0.59, 0.29, 0.0}
\definecolor{ao(english)}{rgb}{0.0, 0.5, 0.0}
\definecolor{verde}{rgb}{0.12, 0.8, 0.17}
\def\rouge{\color{red}}

\def\sur{\overline}

\def\CH{\mathcal{H}}
\newcommand{\lae}{\varepsilon}

\newcommand{\bind}{\textrm{Ind}}
\newcommand{\bres}{\textrm{Res}}

\newcommand{\biso}{\textrm{Iso}}

\newcommand{\Ind}{\mathrm{Ind}}
\newcommand{\Res}{\mathrm{Res}}

\newcommand{\Ker}{\mathrm{Ker}}

\newtheorem{teo}{Theorem}[section]
\newtheorem{prop}[teo]{Proposition}

\newtheorem{lema}[teo]{Lemma}

\newtheorem{nota}[teo]{Notation}

\newtheorem{rem}[teo]{Remark}

\def\C{\mathbb{C}}

\def\Z{\mathbb{Z}}
\def\HH{\mathcal{H}H}
\def\un{\mathbf{1}}
\def\mpn{\medskip\par\noindent}

\def\hZt{\widehat{\Z}^\times}
\def\Loc{\mathcal{L}oc}
\def\sur{\overline}

\def\N{\mathbb{N}}
\def\pf{\noindent{\bf Proof:} }
\def\mpn{\medskip\par\noindent}
\def\endpf{~\leaders\hbox to 1em{\hss\  \hss}\hfill~\raisebox{.5ex}{\framebox[1ex]{}}\smallskip\par}

\reversemarginpar

\title{On the separability of some Green biset functors}
\author{Serge Bouc and Nadia Romero}

\begin{document}

\maketitle
\begin{abstract}
We show that the Green biset functor $R_{\mathbb{C}}$ of complex characters over $\mathbb{Z}$, is not separable, i.e. it is not projective as a bimodule over itself. Also, we show that $RB_G$, the Burnside biset functor shifted by a finite group $G$, over a commutative ring $R$, is separable if and only if $|G|$ is invertible in $R$. Finally, to address the question of the relation between functors and their evaluations, we show that the Burnside $R$-algebra $RB(G)$ is separable if and only if $|G|$ is invertible in $R$.\medskip\\
{\bf Keywords:} monoidal category, functor category, Green biset functor, separability.\\
{\bf AMS MSC (2020):} 16Y99, 18D99, 18M05, 20J15.

\end{abstract}

\section{Introduction}
Let $R$ be a unital commutative ring and $A$ be an (associative) $R$-algebra. Among the various definitions of $A$ being {\em separable} (over $R$), one of them (\cite[Proposition 1.1]{demeyer-ingraham}) is that $A$ be projective as an $(A,A)$-bimodule, or equivalently, that the product map 
$$\mu_A:A\otimes_RA\to A$$ be split surjective. This definition allows for generalizations to other contexts: For example, in~\cite[Definition 5.5]{hoch}, the second author introduces the notion of separability for a monoid $A$ in the category $\mathcal{F}$ of $R$-linear functors from an $R$-linear monoidal category $\mathcal{X}$ to the category of $R$-modules. \par
Here, we consider the special case where $\mathcal{X}$ is the biset category of finite groups (\cite[Definition 3.1.1]{biset}), for its monoidal structure given by the direct product of finite groups. In this case, a monoid in $\mathcal{F}$ is called a {\em Green biset functor} over $R$. We give examples of two classical Green biset functors, the functor $R_\C$ of complex characters over $\Z$ and the shifted Burnside functors $RB_G$, where $G$ is a fixed finite group. We prove (Proposition~\ref{R_C non-separable}) that $R_\C$ is not separable, and (Theorem~\ref{sep}) that $RB_G$ is separable if and only if the order of $G$ is invertible in $R$. Since the question of separability requires the use of the tensor product of functors, in Section \ref{prelim} we  recall and state some properties of it, which we will need to treat the case of $RB_G$. Some of these properties are actually well known so, since their nature is rather technical, we state them without a proof.

We conclude (Section~\ref{evaluations}) by comparing in some examples the separability of a Green functor $A$ and the separability of its evaluations. In particular, we show  (Theorem~\ref{burnside} for $R=\Z$) that whereas the Burnside functor $B$ is always separable (since the multiplication morphism $B\otimes B\to B$ is an isomorphism), the Burnside ring $B(G)$ of a non-trivial finite group $G$ is not separable.  However, we also show  (Proposition~\ref{burnside2} for $R=\Z$) that the first Hochschild cohomology group $HH^1\big(B(G),B(G)\big)$ is always trivial.\medskip\par

\section{Preliminaries}
\label{prelim}


 In what follows, $R$ is a commutative ring with unity, denoted by 1. The trivial group is denoted by $\uno$. For a finite group $G$, we denote by $B(G)$ its {\em Burnside group}, i.e. the Grothendieck group of finite $G$-sets. The cartesian product of $G$-sets endows $B(G)$ with a structure of commutative ring, called the {\em Burnside ring} of $G$. The {\em Burnside algebra} $RB(G)$ of $G$ over $R$ is the tensor product $R\otimes _\Z B(G)$. As an $R$-module, it has a basis consisting of the elements $[G/H]=1\otimes_\Z[G/H]$, for $H$ in a set $[s_G]$ of representatives of conjugacy classes of subgroups of $G$.\par
For a subgroup $H$ of $G$, we denote by $\Phi_H:\alpha\mapsto |\alpha^H|$ the unique $R$-linear map from $RB(G)$ to $R$ sending a finite $G$-set $X$ to the cardinality of the set $X^H$ of $H$-fixed points on $X$. The map $\Phi_H$ is a ring homomorphism. \par
When the order of $G$ is invertible in $R$, we have idempotents (\cite{yoshidaidemp}, \cite{gluck})
$$e_H^G=\frac{1}{|N_G(H)|}\sum_{K\leqslant H}|K|\mu(K,H)[G/K]$$
in the algebra $RB(G)$, indexed by subgroups $H$ of $G$, up to conjugation. The idempotents $e_H^G$, for $H\in [s_G]$, are orthogonal, and their sum is equal to the idendity element $[G/G]$ of $RB(G)$. The idempotent $e_H^G$ is characterized by the fact that $e_H^G\cdot \alpha=\Phi_H(\alpha)e_H^G$ for any $\alpha\in RB(G)$.\medskip\par

Recall that the biset category with coefficients in $R$ is the category $R\mathcal{C}$ having as objects all finite groups and as set of arrows from a group $G$ to a group $H$, the Burnside group with coefficients in $R$ of $H\times G$, which we denote by $RB(H,\, G)$. The reason for this notation is that we think of finite $(H\times G)$-sets as $(H,\, G)$\textit{-bisets}. We invite the reader to take a look at Chapter 2 in \cite{biset} for the definition of bisets and their composition $\circ$ in $R\mathcal{C}$. The category of $R$-linear functors from $\mathcal{C}$ to $R$-Mod, denoted by $\mathcal{F}_{\mathcal{C},R}$, is called the category of biset functors.

We recall the notation of  some of the basic bisets we will use throughout the paper. Let $G$ be a finite group, $H$ be a subgroup of $G$ and $K$ be a group isomorphic to $G$.
\begin{itemize}
\item[$\bullet$] The induction, $\bind_H^G$, is the natural $(G,\,H)$-biset $G$.
\item[$\bullet$] The restriction, $\bres_H^G$, is the natural $(H,\,G)$-biset $G$. 
\item[$\bullet$] The isomorphism, $\biso^{G}_{K}$, is the natural $(K,\,G)$-biset $G$.
\end{itemize}

When applying the Burnside functor to any of these arrows, say the induction, instead of writing $RB(\bind_H^G)$, we will simply write $\bind_H^G$.

\begin{nota}
\begin{enumerate}
\item Let $G$ be a finite group. Since the identity arrow for the object $G$ in the biset category is the (class of the) $(G,\, G)$-biset $G$ in $RB(G,\,G)$, we denote this arrow simply as $G$. 
\item Recall  that given finite groups $H$, $H'$, $K$, $K'$, a  finite $(H,\, K)$-biset $X$ and a finite $(H',\, K')$-biset, the product $X\times Y$ has a natural structure of $(H\times H',\, K\times K')$-biset. This defines a bilinear map
\begin{displaymath}
RB(H,\, K)\times RB(H',\, K')\rightarrow RB(H\times H',\, K\times K'),
\end{displaymath}
which we continue to denote by $(\alpha,\, \beta)\mapsto \alpha\times \beta$. For more properties on this product see Chapter 8 in \cite{biset}.
\item Given a family of finite groups $G_1,\ldots ,G_m$ we may abbreviate $G_1\times \cdots\times G_m$ as $G_1\cdots G_m$. If $G_1=\cdots =G_m=G$, we simply note this product as $G^m$.
\end{enumerate}
\end{nota}

The direct product of groups and the product of bisets, just defined, make the biset category a symmetric monoidal category (see Lemma 8.1.2 in \cite{biset}). Actually, it is an essentially small symmetric monoidal category, enriched in $R$-Mod. So, $\mathcal{F}_{\mathcal{C},R}$ is also endowed with a tensor product structure $\otimes$, given by the Day convolution (see \cite{libro}, for instance). With this, $\mathcal{F}_{\mathcal{C},R}$ becomes an abelian, symmetric monoidal, closed category  with identity given by the Burnside functor $RB(\,\_\,,\uno)$. A monoid in $\mathcal{F}_{\mathcal{C},R}$ is called a Green biset functor. This means that a Green biset functor is an object $A$ in $\mathcal{F}_{\mathcal{C},R}$, together with morphisms $\mu_A:A\otimes A\rightarrow A$ and $e_A:RB\rightarrow A$ satisfying obvious associativity and identity diagrams, as stated in Definition 8.5.1 in \cite{biset}. An equivalent way of defining a Green biset functor is given right after Definition 8.5.1 in \cite{biset}, in terms of \textit{bilinear products}. We will use both  definitions, depending on the situation. The same applies for modules over Green biset functors (see Definition 8.5.5 and the paragraph after it).

\begin{rem}
\label{adj-sim}
 As a corollary of Lemma 8.1.2 in \cite{biset}, for any finite group $G$, we have an $R$-linear functor
\begin{displaymath}
\,\_\,\times G:R\mathcal{C}\rightarrow R\mathcal{C},
\end{displaymath}
which is self-adjoint. Indeed, given $H$, $G$ and $K$ finite groups, the $R$-modules
\begin{displaymath}
RB(H\times G,\, K)\quad\textrm{and}\quad RB(H,\, K\times G)
\end{displaymath}
identify both with $RB(H\times G\times K)$. Hence, a finite $(H\times G\times K)$-set can be seen as an $(H\times G,\, K)$-biset, as an $(H,\, G\times K)$-biset or as an $(H,\, K\times G)$-biset, depending on the situation. 
\end{rem}

We will be dealing with two examples of Green biset functors, the functor of complex characters with integer coefficients $R_{\mathbb{C}}$, and the shifted Burnside biset functor $RB_G$, for a fixed finite group $G$.

Regarding $R_{\mathbb{C}}$, recall that it is the Green biset functor sending a finite group $G$ to the group $R_\C(G)$ of its complex characters (or equivalently, the Grothendieck group of the category of finitely generated $\C G$-modules). The biset operations are induced by tensoring with permutation bimodules: If $H$ is a finite group, and $U$ is a finite $(H,G)$-biset, then the functor $M\mapsto \C U\otimes_{\C G}M$ from finitely generated $\C G$-modules to finitely generated $\C H$-modules induces a linear map $R_\C(U):R_\C(G)\to R_\C(H)$. Here $\C U$ is the $\C$-vector space with basis $U$, viewed as a $(\C H,\C G)$-bimodule. \par
The Green functor structure on $R_\C$ is induced by the external tensor product: If $G$ and $H$ are finite groups, if $M$ is a finitely  generated $\C G$-module and $N$ is a finitely generated $\C H$-module, let $M\boxtimes N$ denote the tensor product $M\otimes_\C N$, endowed with its obvious structure of $\C(G\times H)$-module. This induces a product 
$$(\chi,\rho)\in R_\C(G)\times R_\C(H)\mapsto \chi\times\rho\in R_\C(G\times H),$$
and one checks easily that we get in this way a Green biset functor structure on $R_\C$. The identity element $\epsilon_{R_\C}\in R_\C(\un)$ is the class of the trivial module $\C$ for the trivial group $\un$, i.e. the trivial character of the trivial group. \par

Given a finite group $G$, the shifted functor $RB_G$ is defined by
\begin{displaymath}
RB_G(H)=RB(H\times G)\quad\textrm{and}\quad RB_G(\alpha)=RB(\alpha\times G)
\end{displaymath}
for $H$ a finite group and $\alpha$ and arrow in $R\mathcal{C}$. It is a Green biset functor with the product
\begin{displaymath}
RB_G(H)\times RB_G(K)\rightarrow RB_G(H\times K)
\end{displaymath}
sending an $HG$-set $X$ and a $KG$-set $Y$ to the $HKG$-set $X\times Y$ with \textit{diagonal} action of $G$. What we mean by diagonal action of $G$ is the following:
\begin{displaymath}
(h,\,k,\,g)(x,\, y)=((h,\,g)x,\, (k,\,g)y)
\end{displaymath}
for all $(h,\,k,\,g)\in HKG$ and $(x,\,y)\in X\times Y$.  The identity element $\varepsilon$ in $RB_G(\uno)$ is the trivial $G$-set $\{\bullet\}$.

\begin{nota}
For  $a\in RB(H\times G)$ and $b\in RB(K\times G)$, their product in $RB(HKG)$, obtained by extending linearly the previous diagonal construction, is denoted by $a\times^Gb$. Using the symmetry in $R\mathcal{C}$, this construction is well defined no matter where the $G$ is in the product. That is, we may apply it to $a\in RB(H\times G)$ and $b\in RB(G\times K)$, and the result can be seen in $RB(HGK)$ or in $RB(HKG)$ if it is convenient.  We will use the same notation in all possible cases, taking care there is no risk of confusion.

We denote by $\Delta (G)$ the subgroup $\{(g,\,g)\mid g\in G\}$ of $G\times G$.
\end{nota}

With $X$ and $Y$ as above, we see that if we consider first $X\times Y$ as an $HGKG$-set, then
\begin{displaymath}
X\times^GY=\biso^{HK\Delta(G)}_{HKG}\circ\bres^{HKGG}_{HK\Delta(G)}\circ \biso^{HGKG}_{HKGG}(X\times Y).
\end{displaymath}
Usually we will omit the isomorphism between $\Delta(G)$ and $G$.

To work with the functor $RB_G$ we also need to recall some general facts concerning the tensor product of biset functors. We begin with the following description, appearing in Section 8.4 of \cite{biset}.

\begin{rem}
\label{ten-obj}
Let $M$ and $N$ be biset functors, $G$ and $H$ be finite groups and $\varphi\in RB(H,\,G)$. Then
\begin{displaymath}
(M\otimes N)(G)=\left(\bigoplus_{D,D',f}(M(D)\otimes_R N(D'))\right)/\mathcal{R},
\end{displaymath}
where $D$ and $D'$ run through a given set $\mathcal{S}$ of representatives of isomorphism classes of groups and $f\in RB(G,\, D\times D')$. An element of the form $m\otimes n\in M(D)\otimes_R N(D')$, for a summand indexed by $(D,D',f)$, is denoted by $[m\otimes n]_{D,D',f}$. With this notation, $\mathcal{R}$ is the $R$-submodule of $\bigoplus_{\substack{D,D',f}}(M(D)\otimes_R N(D'))$ generated by elements of the form
\begin{displaymath}
[m\otimes n]_{D,D',rf+r'f'}-(r[m\otimes n]_{D,D',f}+r'[m\otimes n]_{D,D',f'})\textrm{ and }
\end{displaymath}
\begin{displaymath}
[m\otimes n]_{D,D',f_1\circ(\alpha\times\beta)}-[M(\alpha)(m)\otimes N(\beta)(n)]_{D_1,D_1',f_1},
\end{displaymath}
for $r$, $r'\in R$, $f'\in RB(G,\, D\times D')$, $D_1$ and $D'_1$ groups in $\mathcal{S}$, $\alpha\in RB(D_1,\,D)$, $\beta\in RB(D_1',\,D')$ and $f_1\in RB(G,\, D_1\times D'_1)$.

Also, $(M\otimes N)(\varphi):(M\otimes N)(G)\rightarrow (M\otimes N)(H)$ sends the class of $[m\otimes n]_{D,D',f}$ to the class of $[m\otimes n]_{D,D',\varphi\circ f}$.
\end{rem}

In what follows we will  also use the notation $[m\otimes n]_{D,D',f}$ for the class of $[m\otimes n]_{D,D',f}$ in the quotient  by $\mathcal{R}$. Also, $D$, $D'$, $G$, $H$ and $K$ denote finite groups.

\begin{rem}
\label{ten-fle}
By Remark 8.4.3 in \cite{biset}, we know that the arrows from a tensor product of biset functors, $M\otimes N$, to another biset functor $P$, are in one-to-one correspondence with the set of {\em bilinear pairings} from $M$, $N$ to $P$, i.e. the set of natural transformations from the bifunctor $(G,H)\mapsto M(G)\times N(H)$  from $R\mathcal{C}\times R\mathcal{C}$ to $R\Mod$ to the bifunctor $(G,H)\mapsto P(G\times H)$. Such a bilinear pairing consists of a family $(\phi_{G,H})$ of bilinear maps
\begin{displaymath}
\phi_{G,\, H}:M(G)\times N(H)\rightarrow P(G\times H),
\end{displaymath}
satisfying obvious functoriality conditions. 
\end{rem}


The relation between these bilinear pairings and natural transformations is the follo\-wing: Given a bilinear pairing $\phi$ as in the previous remark, the corresponding natural transformation $M\otimes N\rightarrow P$ is given by
\begin{displaymath}
(M\otimes N)(K)\rightarrow P(K)\quad [m\otimes n]_{D,D',f}\mapsto P(f)\big(\phi_{D,\,D'}(m,\,n)\big),
\end{displaymath}
for $f:D\times D'\rightarrow K$. On the other hand, given a natural transformation $\psi:M\otimes N\rightarrow P$, the corresponding bilinear pairing is given by
\begin{displaymath}
M(G)\times N(H)\rightarrow P(G\times H)\quad (m,\, n)\mapsto  \psi_{G\times H}\big([m\otimes n]_{G,H,G\times H}\big).
\end{displaymath}
In particular, whenever we have two natural transformations $t:M\rightarrow M_1$ and $u:N\rightarrow N_1$ between biset functors, we have:
\begin{displaymath}
(t\otimes u)_K:(M\otimes N)(K)\rightarrow (M_1\otimes N_1)(K), \quad [m\otimes n]_{D,D',f}\mapsto \big[t_D(m)\otimes u_{D'}(n)\big]_{D,D',f}
\end{displaymath} 
for $f:D\times D'\rightarrow K$. The bilinear pairings are given by
\begin{displaymath}
M(G)\times N(H)\rightarrow (M_1\otimes N_1)(G\times H),\quad (m,\, n)\mapsto \big[t_G(m)\otimes u_{H}(n)\big]_{G,H,G\times H}.
\end{displaymath}

As a corollary of these observations we have the following lemma.

\begin{lema}
\label{acciones}
Let $A$ be a Green biset functor and $M$ be an $A$-module. If the action of $A$ on $M$ is given by the bilinear map
\begin{displaymath}
A(G)\times M(H)\mapsto M(G\times H)\quad (a,\, m)\mapsto a\times m\textrm{ for }a\in A(G), m\in M(H),
\end{displaymath}
then the natural transformation $A\otimes M\rightarrow M$ is given by 
\begin{displaymath}
(A\otimes M)(K)\rightarrow M(K)\quad [b\otimes n]_{D,D',f}\mapsto M(f)(b\times n) 
\end{displaymath}
for $f:D\times D'\rightarrow K$, $b\in A(D)$ and $n\in M(D')$. In particular, the product of $A$ is given by
\begin{displaymath}
(\mu_A)_K:(A\otimes A)(K)\rightarrow A(K)\quad [a\otimes b]_{D,D',f}\mapsto A(f)(a\times b) 
\end{displaymath}
for $f:D\times D'\rightarrow K$, $a\in A(D)$ and $b\in M(D')$.
\end{lema}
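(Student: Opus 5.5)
The lemma follows from the dictionary in Remark~\ref{ten-fle} between bilinear pairings and morphisms out of a tensor product. By Definition 8.5.5 of \cite{biset} and the paragraph after it, an $A$-module structure on $M$ amounts to two equivalent data. One is a morphism $\nu:A\otimes M\to M$ satisfying the associativity and unit diagrams. The other is a family of bilinear maps $A(G)\times M(H)\to M(G\times H)$, $(a,m)\mapsto a\times m$, satisfying functoriality, associativity and unit conditions. These are matched by the correspondence of Remark~\ref{ten-fle}, with $P=M$. So I only need to make that correspondence explicit in the direction from pairings to morphisms.

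First, I will record that the family $\phi_{G,H}(a,m)=a\times m$ is a bilinear pairing in the sense of Remark~\ref{ten-fle}. This means it is natural in both variables: for $\alpha\in RB(G_1,G)$ and $\beta\in RB(H_1,H)$ we have
\[
A(\alpha)(a)\times M(\beta)(m)=M(\alpha\times\beta)(a\times m).
\]
This is exactly the functoriality condition included in the definition of a module via bilinear products, so nothing needs to be proved here.

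Next, I will apply the explicit formula stated after Remark~\ref{ten-fle}. The morphism associated with $\phi$ sends $[b\otimes n]_{D,D',f}$ to $M(f)(\phi_{D,D'}(b,n))=M(f)(b\times n)$. Since the correspondence is a bijection whose inverse is $\psi\mapsto\big((a,m)\mapsto\psi_{G\times H}([a\otimes m]_{G,H,G\times H})\big)$, I will check that the morphism so defined returns the original pairing: taking $f=G\times H$, the identity, gives $M(\mathrm{id})(a\times m)=a\times m$. Hence this morphism is the action morphism $A\otimes M\to M$.

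Finally, the product of $A$ is the special case $M=A$, with $A$ viewed as a module over itself. Its bilinear product $a\times b$ corresponds to $\mu_A$ under Definition 8.5.1 of \cite{biset}, and the same formula gives $(\mu_A)_K([a\otimes b]_{D,D',f})=A(f)(a\times b)$. There is no real obstacle in this argument. The only point requiring care is compatibility with the relations $\mathcal{R}$ of Remark~\ref{ten-obj}, that is, well-definedness. Linearity in $f$ is clear. For the second type of relation, I need $M(f_1\circ(\alpha\times\beta))(b\times n)=M(f_1)(A(\alpha)b\times M(\beta)n)$, which is precisely the naturality of the pairing noted above. This well-definedness is in any case already contained in Remark 8.4.3 of \cite{biset}.
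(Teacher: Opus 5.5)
Your proposal is correct and follows the same route as the paper, which presents the lemma as an immediate corollary of the explicit correspondence between bilinear pairings and morphisms out of a tensor product stated after Remark~\ref{ten-fle}, applied with $P=M$ and then with $M=A$. Your extra checks (recovering the pairing at $f=G\times H$, and compatibility with the relations $\mathcal{R}$) are already part of that correspondence, so they are harmless but not needed.
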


Since the category of biset functors is monoidal, the following lemma holds. Nevertheless, in the next section we will need explicit functions for the isomorphism. The proof is a bit long and tedious, so we omit it for the sake of simplicity.

\begin{lema}
\label{asso}
Let $M$, $N$ and $P$ be biset functors, the associativity of the tensor product $(M\otimes N)\otimes P\cong M\otimes (N\otimes P)$ is given by the arrows
\begin{displaymath}
\big((M\otimes N)\otimes P\big)(K)\rightarrow \big(M\otimes (N\otimes P)\big)(K)
\end{displaymath}
\begin{displaymath}
 \big[[m\otimes n]_{X_1,Y_1,f_1}\otimes l\big]_{X,Y,f}\mapsto\big[m\otimes[n\otimes l]_{Y_1,Y,Y_1Y}\big]_{X_1, Y_1Y, f\circ(f_1Y)}
\end{displaymath}
and
\begin{displaymath}
\big(M\otimes (N\otimes P)\big)(K)\rightarrow \big((M\otimes N)\otimes P\big)(K)
\end{displaymath}
\begin{displaymath}
 \big[m\otimes [n\otimes l]_{X_2,Y_2,f_2}\big]_{X,Y,f}\mapsto\big[[m\otimes n]_{X,X_2,XX_2}\otimes l\big]_{XX_2, Y_2, f\circ(Xf_2)},
\end{displaymath}
for $X$, $Y$, $X_i$, $Y_i$, with $i=1,\,2$, finite groups, $f_1:X_1 Y_1\rightarrow X$ $f_2:X_2Y_2\rightarrow Y$ and $f:X Y\rightarrow K$.
\end{lema}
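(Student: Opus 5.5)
The plan is to check directly that the two displayed families of maps are well defined and natural in $K$, that they are mutually inverse, and that they agree with the abstract associativity isomorphism. For the last point I will use the universal property of Remark~\ref{ten-fle} rather than compare with the Day convolution formula. I work with the generators-and-relations description of Remark~\ref{ten-obj}, applied twice.

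\textbf{Well-definedness.} Fix the outer data $(X,Y,f)$ and $l\in P(Y)$. The assignment
$$[m\otimes n]_{X_1,Y_1,f_1}\mapsto \big[m\otimes[n\otimes l]_{Y_1,Y,Y_1Y}\big]_{X_1,Y_1Y,f\circ(f_1Y)}$$
must respect the relations $\mathcal{R}$ of $(M\otimes N)(X)$.

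Linearity in $f_1$ follows from bilinearity of $\times$ and $\circ$ on bisets.

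For the second kind of relation, take $f_1=f_1'\circ(\alpha\times\beta)$. The left side gives
$$\big[m\otimes[n\otimes l]_{Y_1,Y,Y_1Y}\big]_{X_1,Y_1Y,\,f\circ(f_1'Y)\circ(\alpha\times\beta\times Y)}.$$
Applying the relation in the outer tensor product, this equals
$$\big[M(\alpha)m\otimes (N\otimes P)(\beta\times Y)[n\otimes l]_{Y_1,Y,Y_1Y}\big].$$
By Remark~\ref{ten-obj}, the second factor is $[n\otimes l]_{Y_1,Y,\beta\times Y}$. The inner relation turns this into $[N(\beta)n\otimes l]_{Y_1',Y,Y_1'Y}$, which is exactly the image of the right-hand side.

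Next, the map must be compatible with the outer relations, in which $(M\otimes N)(\gamma)$ and $P(\delta)$ appear. Here $(M\otimes N)(\gamma)$ just replaces $f_1$ by $\gamma\circ f_1$, so this reduces to the identity $(\gamma\times\delta)\circ(f_1\times Y)$ versus moving $\delta$ onto $l$. This uses functoriality of $\_\times\_$ on $R\mathcal{C}$ (Lemma 8.1.2 in \cite{biset}) together with the inner relation $[n\otimes P(\delta)l]_{Y_1,Y',Y_1Y'}=[n\otimes l]_{Y_1,Y,Y_1\times\delta}$.

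Naturality in $K$ is immediate, since both sides post-compose $f$ with $\varphi$. The second map is treated symmetrically.

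\textbf{Inverse and identification with the canonical isomorphism.} Composing the two maps sends $\big[[m\otimes n]_{X_1,Y_1,f_1}\otimes l\big]_{X,Y,f}$ to
$$\big[[m\otimes n]_{X_1,Y_1,X_1Y_1}\otimes l\big]_{X_1Y_1,Y,f\circ(f_1Y)}.$$
Using the outer relation with $\alpha=f_1$, this is the original element, since $(M\otimes N)(f_1)[m\otimes n]_{X_1,Y_1,X_1Y_1}=[m\otimes n]_{X_1,Y_1,f_1}$. The other composite is handled the same way.

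To see that this is the associator of the monoidal structure, I use Remark~\ref{ten-fle} iterated. Morphisms out of either triple product correspond to trilinear pairings $M(G)\times N(H)\times P(L)\to Q(GHL)$. On the generators $[[m\otimes n]_{G,H,GH}\otimes l]_{GH,L,GHL}$, the map is
$$[[m\otimes n]_{G,H,GH}\otimes l]_{GH,L,GHL}\mapsto[m\otimes[n\otimes l]_{H,L,HL}]_{G,HL,GHL},$$
which is the canonical identification.

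The main obstacle is the compatibility with the outer relations in the first step. There one must track how a biset of the form $\gamma\times\delta$ commutes past $f_1\times Y$, and justify the identity $(\gamma\circ f_1)\times(\delta\circ\mathrm{id})=(\gamma\times\delta)\circ(f_1\times Y)$ in $R\mathcal{C}$, including the implicit associativity isomorphisms $X_1\times(Y_1\times Y)\cong(X_1\times Y_1)\times Y$. I would suppress these isomorphisms, as the paper does.
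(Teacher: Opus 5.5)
The paper gives no proof of this lemma; it calls the verification long and tedious and omits it. So there is nothing to compare against. Your direct check is correct, and it is exactly the verification the authors allude to.

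The two delicate points go through as you say.
\begin{itemize}
\item Compatibility with the outer relations of $(M\otimes N)\otimes P$ follows from the interchange law $(\gamma\times\delta)\circ(f_1\times Y)=\big((\gamma\circ f_1)\times Y'\big)\circ(X_1Y_1\times\delta)$. You then apply the outer relation of $M\otimes(N\otimes P)$ with $Y_1\times\delta$ as the arrow moved onto the $N\otimes P$ factor.
\item The two composites reduce to the identity via the outer relations with $f_1$ (resp.\ $f_2$) as the moved arrow.
\end{itemize}

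Your final step, the identification with the canonical associator, can be phrased more cleanly. Your inverse computation already shows that the elements $\big[[m\otimes n]_{G,H,GH}\otimes l\big]_{GH,L,GHL}$ generate $(M\otimes N)\otimes P$ as a biset functor. Hence any natural transformation out of it is determined by its values on these elements, and that includes the canonical associator. This avoids appealing to trilinear pairings at all.

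One piece of bookkeeping is suppressed, as in the paper. Products such as $Y_1Y$ or $X_1Y_1$ must be replaced by their representatives in $\mathcal{S}$ via isomorphism bisets, in addition to the associativity isomorphisms you mention. This is harmless.
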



\section{Separability of certain Green biset functors}

\subsection{$R_\C$ is not separable}

We will show that the Green biset functor $R_\C$ is not separable by showing that there exists an $(R_\C,R_\C)$-bimodule $L$ such that the first cohomology functor $\HH^1(R_\C,L)$ is non-zero. For this, we need the following notation:
\begin{nota} \begin{enumerate}
\item We denote by $\hZt$ the inverse limit for $n\in \N$ of the unit groups $(\Z/n\Z)^\times$ of the rings $\Z/ n\Z$, for the projection maps $\pi_{n,m}:(\Z/n\Z)^\times\to (\Z/m\Z)^\times$, when $m|n$. We denote by $\pi_n$ the natural projection $\hZt\to (\Z/n\Z)^\times$.
\item For a finite group $G$, we denote by $\Loc(G)$ the set of locally constant maps $\hZt\to R_\C(G)$, i.e. the set of maps $f:\hZt\to R_\C(G)$ such that there exist $n\in \N$ and $\sur{f}:(\Z/n\Z)^\times\to R_\C(G)$ with $f=\sur{f}\circ \pi_n$.
\end{enumerate}
\end{nota}
Recall (\cite{biset} Section 7.2) that $\hZt$ acts by automorphisms on the Green biset functor $R_\C$: if $s\in \hZt$, if $G$ is a finite group and $\chi\in R_\C(G)$ is a virtual character of $G$, then $s(\chi)\in R_\C(G)$ is (well) defined by
$$\forall g\in G,\;\big(s(\chi)\big)(g)=\chi(g^{\pi_n(s)}),$$
where $n$ is any multiple of the exponent of $G$.

The set $\Loc(G)$ is an abelian group, for pointwise addition of maps. If $H$ is a finite group, and $U$ is a finite $(H,G)$-biset, we denote by $\Loc(U):\Loc(G)\to \Loc(H)$ the linear map induced by composition with $R_\C(U):R_\C(G)\to R_\C(H)$.
\begin{lema} With these definitions, the assignment $G\mapsto \Loc(G)$ is a biset functor.
\end{lema}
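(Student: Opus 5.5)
The plan is to show that $\Loc$ is a sub-biset functor of the biset functor $G\mapsto \mathrm{Map}(\hZt,R_\C(G))$ of all maps, so that functoriality is inherited from $R_\C$. Concretely, for a finite $(H,G)$-biset $U$, set $\Loc(U)(f)=R_\C(U)\circ f$. The checks, in order, are: (i) this lands in $\Loc(H)$; (ii) it depends only on the class of $U$ in $B(H,G)$ and extends linearly to $\Z B(H,G)$, giving additivity in $U$; (iii) compatibility with composition and identities.

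For (i), suppose $f=\sur{f}\circ\pi_n$ with $\sur f:(\Z/n\Z)^\times\to R_\C(G)$. Then $R_\C(U)\circ f=(R_\C(U)\circ\sur f)\circ\pi_n$. This factors through the same $\pi_n$, so it is locally constant.

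For (ii), $R_\C(U)$ depends only on the isomorphism class of $U$ and is additive under disjoint unions, since $\C(U\sqcup V)\cong \C U\oplus\C V$. For a virtual biset $\alpha=\sum n_U U$ we therefore define $\Loc(\alpha)=\sum n_U \Loc(U)$, and this equals composition with $R_\C(\alpha)$. Each $\Loc(\alpha)$ is a group homomorphism for pointwise addition, because $R_\C(\alpha)$ is additive: $R_\C(\alpha)\circ(f+f')=R_\C(\alpha)\circ f+R_\C(\alpha)\circ f'$.

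For (iii), let $V$ be a $(K,H)$-biset. Then
$$\Loc(V)\circ\Loc(U)(f)=R_\C(V)\circ R_\C(U)\circ f=R_\C(V\times_H U)\circ f=\Loc(V\times_HU)(f),$$
using that $R_\C$ is a biset functor. The identity biset $G$ acts as the identity on $R_\C(G)$, hence as the identity on $\Loc(G)$. Since composition with maps is $\Z$-bilinear, these identities extend to the linear category $\Z\mathcal{C}$.

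No step is a real obstacle. The only point needing care is (i), namely that the image of a locally constant map stays locally constant. It does, because composing on the left does not change the level $n$ through which the map factors.
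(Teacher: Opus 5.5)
Your proposal is correct and takes the same approach as the paper, which defines $\Loc(U)$ as postcomposition with $R_\C(U)$ and calls the verification straightforward. You supply the routine checks behind that word: local constancy is preserved at the same level $n$, the maps are additive, and composition and identities are inherited from $R_\C$.
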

\pf This is straightforward.  \endpf
\begin{lema}
 \begin{enumerate}
\item Let $G,H,K$ be finite groups. For $\chi\in R_\C(G)$, $f\in \Loc(H)$, and $\rho\in R_\C(K)$, let $\chi\times f\times \rho:\hZt\to R_\C(G\times H\times K)$ be the map defined by
$$\forall s\in \hZt,\;(\chi\times f\times \rho)(s)=s(\chi)\times f(s)\times \rho.$$
Then $\chi\times f\times \rho\in \Loc(G\times H\times K)$.
\item The products $(\chi,f,\rho)\in R_\C(G)\times \Loc(H)\times R_\C(K)\mapsto \chi\times f\times \rho\in \Loc(G\times H\times K)$ endow the biset functor $G\mapsto \Loc(G)$ with a structure of $(R_\C,R_\C)$-bimodule.
\end{enumerate}
\end{lema}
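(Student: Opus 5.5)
Part 1 is a finite-level reduction. Let $f\in \Loc(H)$, and choose $n$ and $\sur{f}$ with $f=\sur{f}\circ \pi_n$. Let $N$ be a common multiple of $n$ and of the exponent of $G$. Then $s(\chi)$ depends only on $\pi_N(s)$, and $f(s)$ depends only on $\pi_n(s)=\pi_{N,n}(\pi_N(s))$. So $s\mapsto s(\chi)\times f(s)\times\rho$ factors through $\pi_N$, which makes it locally constant. Hence $\chi\times f\times \rho\in \Loc(G\times H\times K)$.

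For part 2, I will use the description of bimodules by bilinear products, as for modules in Definition 8.5.5 of \cite{biset}. Concretely, I check three things.
\begin{enumerate}
\item The product is $\Z$-trilinear. This holds pointwise, since each $s$ acts additively on $R_\C(G)$ and the external product of characters is bilinear.
\item The identities act trivially. We have $\epsilon_{R_\C}\times f\times \epsilon_{R_\C}=f$ up to the canonical isomorphisms $\un\times H\times\un\cong H$, because $s(\epsilon_{R_\C})=\epsilon_{R_\C}$: the trivial character is fixed by Galois twisting.
\item Associativity holds, namely $(\chi'\times\chi)\times f\times(\rho\times\rho')=\chi'\times(\chi\times f\times\rho)\times\rho'$ up to the associativity isomorphisms of groups. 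Evaluating at $s$, the left side is $s(\chi'\times\chi)\times f(s)\times\rho\times\rho'$. The right side is $s(\chi')\times s(\chi)\times f(s)\times\rho\times\rho'$. These agree because $s$ acts by Green functor automorphisms (\cite{biset} Section 7.2), so $s(\chi'\times\chi)=s(\chi')\times s(\chi)$.
\end{enumerate}
The associativity step includes the compatibility between the left and right actions, which is the same computation.

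The remaining point, and the main one, is naturality: the family of products must be a bilinear pairing in the sense of Remark~\ref{ten-fle}, now trilinear. Take bisets $U\in RB(G',G)$, $V\in RB(H',H)$ and $W\in RB(K',K)$. We need
$$R_\C(U)(\chi)\times \Loc(V)(f)\times R_\C(W)(\rho)=\Loc(U\times V\times W)(\chi\times f\times\rho).$$
Evaluate at $s$. On the right we get $R_\C(U\times V\times W)\big(s(\chi)\times f(s)\times\rho\big)$. Since $R_\C$ is a Green biset functor, its product is natural, so this equals
$$R_\C(U)(s(\chi))\times R_\C(V)(f(s))\times R_\C(W)(\rho).$$
So the two sides agree once $R_\C(U)(s(\chi))=s(R_\C(U)(\chi))$. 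This holds because each $s$ is an automorphism of the biset functor $R_\C$, i.e.\ commutes with all biset operations; that is the content of the cited Section 7.2. With the Galois action commuting with both biset operations and products, every identity reduces pointwise in $s$ to the corresponding identity for $R_\C$, and the verification is routine.
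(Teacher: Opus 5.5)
Your proposal is correct and takes the same approach as the paper. Part 1 is the same factorization through $(\Z/l\Z)^\times$, with $l$ a common multiple of $n$ and the exponent of $G$. Part 2 writes out the associativity, unitality and biset-naturality checks that the paper calls straightforward, each reducing pointwise in $s$ to the fact that $\hZt$ acts by automorphisms of the Green biset functor $R_\C$.
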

\pf 1. Since $f$ is locally constant, there exists $n\in\N$ and $\sur{f}:(\Z/n\Z)^\times\to R_\C(H)$ such that  $f=\sur{f}\circ\pi_n$. Let $m$ be the exponent of $G$, and $l=n\vee m$ be the least common multiple of $n$ and $m$. Then $f=\sur{f}\circ\pi_{l,n}\circ\pi_l$ and $s(\chi)$ only depends on $\pi_l(s)$. It follows that the map $\chi\times f\times \rho$ factors through $(\Z/l\Z)^\times$, so it is locally constant.\mpn
2. We have to check that the products $(\chi,f,\rho)\mapsto \chi\times f\times \rho$ are associative and unital, and commute with the biset operations. All these verifications are straightforward.\endpf

\begin{nota} Let $G$ be a finite group. We denote by $\lambda_G$ the map $\Loc(G)\to R_\C(G)$ sending $f\in R_\C(G)$ to its value $f(1)$ at the identity element of $\hZt$.
\end{nota} 
\begin{lema} The maps $\lambda_G:\Loc(G)\to R_\C(G)$ define an epimorphism of $(R_\C,R_\C)$-bimodules $\lambda:\Loc\to R_\C$.
\end{lema}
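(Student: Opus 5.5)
We have to check three things: each $\lambda_G$ is additive; the family $(\lambda_G)$ is natural for biset operations; it is compatible with the left and right actions of $R_\C$; and it is surjective. Additivity holds because the group structure on $\Loc(G)$ is pointwise addition, so $\lambda_G(f+f')=(f+f')(1)=f(1)+f'(1)$.

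For naturality, let $U$ be a finite $(H,G)$-biset and $f\in\Loc(G)$. By definition $\Loc(U)(f)=R_\C(U)\circ f$. Evaluating at $1\in\hZt$ gives
$$\lambda_H\big(\Loc(U)(f)\big)=R_\C(U)\big(f(1)\big)=R_\C(U)\big(\lambda_G(f)\big).$$
Hence $\lambda$ is a morphism of biset functors.

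For compatibility with the bimodule structure, we use the bilinear description of actions (Lemma~\ref{acciones} and the bilinear-pairing description in Remark~\ref{ten-fle}). It is enough to check that $\lambda$ commutes with the trilinear products $(\chi,f,\rho)\mapsto\chi\times f\times\rho$. On the target, $R_\C$ is a bimodule over itself via $(\chi,\psi,\rho)\mapsto\chi\times\psi\times\rho$. Since $1\in\hZt$ acts as the identity automorphism of $R_\C$ (it raises group elements to the power $\pi_n(1)=1$), we get
$$\lambda_{G\times H\times K}(\chi\times f\times\rho)=1(\chi)\times f(1)\times\rho=\chi\times\lambda_H(f)\times\rho.$$
Once the actions are written as bilinear pairings, this equality is exactly the statement that the induced transformations $R_\C\otimes\Loc\otimes R_\C\to\Loc$ and $R_\C\otimes R_\C\otimes R_\C\to R_\C$ are intertwined by $\mathrm{id}\otimes\lambda\otimes\mathrm{id}$ and $\lambda$. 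This follows from the formula for $t\otimes u$ on elements $[m\otimes n]_{D,D',f}$ recalled after Remark~\ref{ten-fle}.

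Surjectivity is immediate: for $\psi\in R_\C(G)$, the constant map $c_\psi:\hZt\to R_\C(G)$ is locally constant (take $n=1$), and $\lambda_G(c_\psi)=\psi$. Thus every $\lambda_G$ is surjective, and $\lambda$ is an epimorphism in the abelian category of bimodules, since epimorphisms there are detected evaluationwise.

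There is no real obstacle here. The only point needing care is the translation between the trilinear-product formulation of the bimodule structure and the morphism formulation $R_\C\otimes\Loc\otimes R_\C\to\Loc$. Remark~\ref{ten-fle} and Lemma~\ref{acciones} make this translation routine.
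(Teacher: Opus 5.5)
Your proof is correct and follows essentially the same route as the paper: naturality because $\Loc(U)$ is post-composition with $R_\C(U)$, surjectivity via constant maps, and bimodule compatibility from $\lambda_{G\times H\times K}(\chi\times f\times\rho)=1(\chi)\times f(1)\times\rho=\chi\times\lambda_H(f)\times\rho$. One cosmetic slip: you announce three things to check and then list four.
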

\pf By definition of the biset functor structure on $\Loc$, the maps $\lambda_G$ form a morphism of biset functors $\Loc\to R_\C$. This morphism is clearly surjective, since for any finite group $G$ and any $\chi\in R_\C(G)$, the constant map $\gamma_\chi:s\in \hZt\mapsto \chi\in R_\C(G)$ is locally constant, and such that $\lambda_G(\gamma_\chi)=\chi$. Moreover, for finite groups $G,H,K$, and $(\chi,f,\rho)\in R_\C(G)\times \Loc(H)\times R_\C(K)$, we have that
$$\lambda_{G\times H\times K}(\chi\times f\times\rho)=(\chi\times f\times\rho)(1)=1(\chi)\times f(1)\times \rho=\chi\times\lambda_H(f)\times\rho,$$
so $\lambda$ is a morphism of $(R_\C,R_\C)$-bimodules.\endpf
\begin{prop} \label{R_C non-separable}Let $L$ denote the kernel of $\lambda:\Loc\to R_\C$. Then:
\begin{enumerate}
\item $L$ is an $(R_\C,R_\C)$-bimodule.
\item For a finite group $G$, let $d_G:R_\C(G)\to L(G)$ be the map defined by
$$\forall \chi\in R_\C(G), \forall s\in \hZt,\;d_G(\chi)(s)=s(\chi)-\chi\in R_\C(G).$$
Then the maps $d_G$ form a non-inner derivation $R_C\to L$.
\item In particular $\HH^1(R_\C,L)(\un)\neq 0$, so the first Hochschild cohomology functor $\HH^1(R_\C,L)$ is non zero, and the Green biset functor $R_\C$ is not separable.
\end{enumerate}
\end{prop}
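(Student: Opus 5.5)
Part 1 is immediate. By the last lemma, $\lambda:\Loc\to R_\C$ is a morphism of $(R_\C,R_\C)$-bimodules. Kernels of bimodule morphisms are sub-bimodules: each $L(G)$ is stable under biset operations and under the products $\chi\times f\times\rho$, since $\lambda$ commutes with all of these.

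For part 2, I first check that $d_G(\chi)$ lies in $L(G)$. It is locally constant because $s(\chi)$ depends only on $\pi_m(s)$, where $m$ is the exponent of $G$. Its value at $s=1$ is $\chi-\chi=0$, so it lies in $\Ker\lambda_G$.

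Next, the maps $d_G$ commute with biset operations. This is because the action of $\hZt$ on $R_\C$ is by automorphisms of biset functors, so $R_\C(U)(s(\chi))=s(R_\C(U)\chi)$.

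The derivation (Leibniz) rule in the bilinear-product form reads
$$d_{G\times H}(\chi\times\rho)=\chi\times d_H(\rho)+d_G(\chi)\times\rho.$$
Here the left action of $\chi$ on $f\in L(H)$ is $s\mapsto s(\chi)\times f(s)$, and the right action of $\rho$ is $s\mapsto f(s)\times\rho$. Evaluating at $s$, the right-hand side is
$$s(\chi)\times(s(\rho)-\rho)+(s(\chi)-\chi)\times\rho=s(\chi)\times s(\rho)-\chi\times\rho.$$
This equals $s(\chi\times\rho)-\chi\times\rho$, because $\hZt$ acts by Green functor automorphisms. So $d$ is a derivation.

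To show $d$ is not inner, I would work at the trivial group, in keeping with part 3. An inner derivation is $\chi\mapsto \chi\times\ell-\ell\times\chi$ for some $\ell\in L(\un)$. For a general group $G$ and $\chi\in R_\C(G)$, this evaluated at $s$ gives
$$s(\chi)\times\ell(s)-\ell(s)\times\chi,$$
where $\ell(s)\in R_\C(\un)=\Z$. Writing $\ell(s)=n_s\in\Z$, the inner derivation sends $\chi$ to $s\mapsto n_s\big(s(\chi)-\chi\big)$, and we need $n_s=1$ whenever $s(\chi)\neq\chi$. But $\ell\in L(\un)$ means $\ell(1)=0$, and $\ell$ is locally constant. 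So $n_s=0$ on some open subgroup $\pi_N^{-1}(1)$ of $\hZt$.

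Now choose $G$ cyclic of prime order $p$ with $p\nmid N$, and choose $s\in\pi_N^{-1}(1)$ with $\pi_p(s)\neq1$. This is possible by the Chinese remainder theorem. Take $\chi$ a faithful linear character of $G$. Then $s(\chi)=\chi^{\pi_p(s)}\neq\chi$, so $d_G(\chi)(s)\neq0$, while the inner derivation gives $0$ at $s$. Hence $d$ is not inner. The main care needed is the exact convention for inner derivations in \cite{hoch} (shape, and the fact that $\ell$ lives in $L(\un)$); this step is the main obstacle.

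For part 3, I would invoke the description in \cite{hoch}: $\HH^1(R_\C,L)(\un)$ is derivations modulo inner derivations. Part 2 then gives a nonzero class, so $\HH^1(R_\C,L)\neq0$. Separability means $R_\C$ is projective as a bimodule over itself, which forces all $\HH^n$ with $n\geq1$ to vanish by \cite[Section 5]{hoch}. So $R_\C$ is not separable.
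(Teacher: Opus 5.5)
Your proposal is correct and follows essentially the same route as the paper. The structure matches: the kernel of a bimodule map for part 1, the same Leibniz computation, and the same reduction of an inner derivation to $s\mapsto \ell(s)\big(s(\chi)-\chi\big)$ with $\ell(1)=0$, which local constancy then contradicts. The only organizational difference is that you fix the level $N$ first and use the Chinese remainder theorem to find $s\in\Ker\pi_N$ moving a faithful character of $C_p$. The paper instead shows $m(s)=1$ for every $s\neq 1$ via $C_n$, and then uses that $\Ker\pi_n\neq\{1\}$.

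There is one small slip: your prime $p$ must be odd. Since $(\Z/2\Z)^\times$ is trivial, no $s$ with $\pi_2(s)\neq 1$ exists, but any odd prime not dividing $N$ works.
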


\pf 1. This is clear, as $L$ is the kernel of a morphisms of $(R_\C,R_\C)$-bimodules.\mpn
2. First of all $d_G(\chi)(1)=1(\chi)-\chi=0$, so $d_G$ lands in $L(G)$. Then for a finite group~$K$ and for $\rho\in R_\C(K)$, we have
\begin{align*}
\forall s\in\hZt,\;d_{G\times K}(s)&=s(\chi\times \rho)-(\chi\times \rho)\\
&=s(\chi)\times s(\rho)-(\chi\times\rho)\\
&=s(\chi)\times \big(s(\rho)-\rho\big)+\big(s(\chi)-\chi\big)\times\rho\\
&=\big(\chi\times d_K(\rho)\big)(s) - \big(d_G(\chi)\times\rho\big)(s),
\end{align*}
so $d_{G\times K}(\chi\times\rho)=\chi\times d_K(\rho)+d_G(\chi)\times\rho$, and the maps $d_G:R_\C(G)\to L(G)$ form a derivation $d:R_\C\to L$. \par
This derivation is inner if and only if there exists $m\in L(\un)$ such that 
$$d_G(\chi)=\chi\times m-m\times\chi,$$
for any finite group $G$ and any $\chi\in R_\C(G)$. So $m\in\Loc(\un)$ is a map $\hZt\to R_\C(\un)=\Z$ such that $m(1)=0$, and
\begin{align*}
d_G(\chi)(s)=s(\chi)-\chi&=(\chi\times m)(s)-(m\times \chi)(s)\\
&=s(\chi)\times m(s)-m(s)\times\chi\\
&=m(s)\big(s(\chi)-\chi\big),
\end{align*}
for any finite group $G$, any $\chi\in R_\C(G)$, and any $s\in\hZt$. It follows that $m(s)=1$ if there exists a finite group $G$ and $\chi\in R_\C(G)$ such that $s(\chi)\neq \chi$. \par
Now if $s\neq 1$, there exists $n\in\N$ such that $\pi_n(s)\neq 1$, and we can assume $n\geq 3$. We take for $G$ the cyclic group $C_n$ of $n$-th roots of unity in $\C$, and for $\chi$ the inclusion $i_n:C_n\hookrightarrow \C^\times$. Then we have clearly $s(\chi)\neq\chi$, since $n\geq 3$, so $m(s)=1$. This proves that $m(s)=1$ for $s\neq 1$. But since $m$ is locally constant, there exists $n\in\N$ such that $m(s)=m(1)$ for any $s\in \Ker\, \pi_n$. This is a contradiction, as $m(1)=0$ but $\Ker\, \pi_n\neq\{1\}$. It follows that the derivation $d$ is not inner, completing the proof of Assertion~2.\mpn
3. This follows from Assertion 2.\endpf

\subsection{Separability of $RB_G$}

By Corollary 8.4.12 of \cite{biset}, we know that $RB_G\otimes RB_H$ is isomorphic to $RB_{GH}$ as biset functors, we now give an explicit isomorphism between these functors.

\begin{lema}
\label{eliso}
Let $G$, $H$ and $K$ be finite groups. We define,
\begin{displaymath}
\phi_K:(RB_G\otimes RB_H)(K)\rightarrow RB_{GH}(K),\quad [\alpha\otimes \beta]_{D,D',f}\mapsto f\circ (\alpha\times \beta)
\end{displaymath}
for $D$ and $D'$ finite groups, $\alpha\in RB(D,\,G)$, $\beta\in RB(D',\,H)$ and $f\in RB(K,\, DD')$. On the other direction, 
\begin{displaymath}
\varphi_K:RB_{GH}(K)\rightarrow (RB_G\otimes RB_H)(K),\quad u\mapsto [G\otimes H]_{G,H,u},
\end{displaymath}
for $u\in RB(K,\, GH)$. Then $\phi$ and $\varphi$ define natural isomorphisms between $RB_G\otimes RB_H$ and $RB_{GH}$.
\end{lema}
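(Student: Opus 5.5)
We will check four things in turn: that $\phi_K$ is well defined, that $\varphi_K$ is well defined, that both are natural in $K$, and that they are mutually inverse. We identify $RB_G(D)=RB(D\times G)$ with $RB(D,\,G)$ via Remark~\ref{adj-sim}, so that for $\alpha\in RB(D,G)$ and $\beta\in RB(D',H)$ the product $\alpha\times\beta$ lies in $RB(DD',\,GH)$. Then $f\circ(\alpha\times\beta)\in RB(K,\,GH)=RB_{GH}(K)$.

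\textbf{Well-definedness of $\phi_K$.} Rather than checking the relations $\mathcal{R}$ of Remark~\ref{ten-obj} directly, we will use Remark~\ref{ten-fle}. It suffices to show that the maps $(\alpha,\beta)\mapsto \alpha\times\beta\in RB(DD',GH)=RB_{GH}(D\times D')$ form a bilinear pairing from $RB_G$, $RB_H$ to $RB_{GH}$. Bilinearity is clear. For functoriality, take $a\in RB(D_1,D)$ and $b\in RB(D_1',D')$. Then $RB_G(a)(\alpha)=a\circ\alpha$, because the shifted functor acts by $a\times G$, which corresponds to composition under the adjunction identification. So we need
\[
(a\circ\alpha)\times(b\circ\beta)=(a\times b)\circ(\alpha\times\beta),
\]
which is the compatibility of $\times$ with $\circ$ (functoriality of the monoidal product, Chapter 8 of \cite{biset}). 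The natural transformation attached to this pairing sends $[\alpha\otimes\beta]_{D,D',f}$ to $RB_{GH}(f)(\alpha\times\beta)=f\circ(\alpha\times\beta)$, which is exactly $\phi_K$. This also gives naturality of $\phi$ for free.

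\textbf{Well-definedness and naturality of $\varphi_K$.} The map $u\mapsto[G\otimes H]_{G,H,u}$ is $R$-linear by the first family of relations in $\mathcal{R}$ (using the representatives of $G$ and $H$ in $\mathcal{S}$). Naturality is immediate: $(RB_G\otimes RB_H)(\psi)$ sends $[G\otimes H]_{G,H,u}$ to $[G\otimes H]_{G,H,\psi\circ u}$, while $RB_{GH}(\psi)(u)=\psi\circ u$.

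\textbf{Mutually inverse.} First, $\phi_K\varphi_K(u)=u\circ(G\times H)=u$, since $G\times H$ is the identity of $GH$. For the other composite, take a generator $[\alpha\otimes\beta]_{D,D',f}$. Viewing $\alpha$ as $RB_G(\alpha)(G)$ and $\beta$ as $RB_H(\beta)(H)$, where $G$ is the identity element of $RB_G(G)=RB(G,G)$, the second relation in $\mathcal{R}$ gives
\[
[\alpha\otimes\beta]_{D,D',f}=[RB_G(\alpha)(G)\otimes RB_H(\beta)(H)]_{D,D',f}=[G\otimes H]_{G,H,f\circ(\alpha\times\beta)}.
\]
This equals $\varphi_K\phi_K([\alpha\otimes\beta]_{D,D',f})$. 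Since such elements generate $(RB_G\otimes RB_H)(K)$, this finishes the proof.

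The main obstacle is bookkeeping rather than substance. We must make sure that the identification $RB_G(D)\cong RB(D,G)$ turns $RB_G(a)$ into left composition $a\circ-$; this requires checking that $(a\times G)\circ X$ corresponds to $a\circ X$ under the adjunction of Remark~\ref{adj-sim}. We must also check that the relation in $\mathcal{R}$ is applied in the correct direction, i.e.\ with $\alpha\in RB(D,G)$ playing the role of the arrow from $G$ to $D$. Both points are direct checks with the standard properties of bisets.
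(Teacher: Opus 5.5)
Your proof is correct and follows the paper's argument: the two composite computations are exactly the paper's. One uses $u\circ(G\times H)=u$; the other uses the second relation in $\mathcal{R}$ together with $RB_G(\alpha)(G)=\alpha$. The only addition is that you justify the well-definedness and naturality of $\phi$, which the paper calls straightforward, via the bilinear pairing $(\alpha,\beta)\mapsto\alpha\times\beta$ of Remark~\ref{ten-fle} and the interchange law $(a\circ\alpha)\times(b\circ\beta)=(a\times b)\circ(\alpha\times\beta)$.
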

\pf
It is straightforward to see that $\phi_K$ is well defined and that $\phi$ and $\varphi$ are natural transformations. Now $\varphi_K\circ\phi_K$ gives
\begin{displaymath}
[\alpha\otimes \beta]_{D,D',f}\mapsto f\circ (\alpha\times \beta)\mapsto [G\otimes H]_{G,H,f\circ(\alpha\times \beta)},
\end{displaymath}
but
\begin{displaymath}
[G\otimes H]_{G,H,f\circ(\alpha\times \beta)}=[B_G(\alpha)(G)\otimes B_H(\beta)(H)]_{D,D',f}=[\alpha\otimes\beta]_{D,D',f}.
\end{displaymath}
On the other hand, $\phi_K\circ\varphi_K$ gives
\begin{displaymath}
u\mapsto [G\otimes H]_{G,H,u}\mapsto u\circ(G\times H)=u.
\end{displaymath}
Hence $\phi_K$ and $\varphi_K$ are mutual inverses.
\endpf

Given $A$ and $C$, Green biset functors, the action of $A$ on $A\otimes C$ is given by the composition of arrows
\begin{displaymath}
\Lambda: A\otimes(A\otimes C)\cong (A\otimes A)\otimes C\rightarrow A\otimes C,
\end{displaymath}
where the last arrow is $\mu_{A}\otimes C$. Using the results following Remark \ref{ten-fle}, in Section 2, we have that $\Lambda_K$ behaves in the following way, 
\begin{displaymath}
\begin{array}{rcl}
\big[a\otimes [b\otimes c]_{X_1,Y_1,f_1}\big]_{X,Y,g}&\mapsto & \big[[a\otimes b]_{X,X_1,XX_1}\otimes c\big]_{XX_1,Y_1, g\circ(Xf_1)}\\
&\mapsto & \Big[(\mu_A)_{XX_1}\big([a\otimes b]_{X,X_1,XX_1}\big)\otimes c\Big]_{XX_1,Y_1, g\circ(Xf_1)}\\
&\mapsto & \big[(a\times b)\otimes c\big]_{XX_1,Y_1,g\circ(Xf_1)}
\end{array}
\end{displaymath}
for $a\in A(X)$, $b\in A(X_1)$, $c\in C(Y_1)$, $f_1\in RB(Y,\, X_1Y_1)$ and $g\in RB(K,\, XY)$. Using bilinear maps, this translates, for groups $L$ and $K$, as 
\begin{displaymath}
A(L)\times (A\otimes C)(K)\rightarrow (A\otimes C)(L\times K)
\end{displaymath} 
\begin{displaymath}
\big(\alpha,\, [\beta\otimes \gamma]_{D,D',f}\big)\mapsto \Lambda_{L\times K}\Big(\big[\alpha\otimes [\beta\otimes \gamma]_{D,D',f}\big]_{L,K,LK}\Big), 
\end{displaymath}
which is equal to $[(\alpha\times\beta)\otimes \gamma]_{LD,D', Lf}$, for $\alpha\in A(L)$, $\beta\in A(D)$, $\gamma\in C(D')$ and $f\in RB(K,\, DD')$. A similar description can be given for the right action of $C$. Now we apply this to $A=RB_G$ and $C=RB_H$. 

\begin{lema}
\label{acciones-trasl}
Let $G$, $H$, $K$ and $L$  be finite groups. The left action of $RB_G$ and the right action of $RB_H$ in $RB_G\otimes RB_H$ are given by the natural actions
\begin{displaymath}
RB_G(L)\times RB_{GH}(K)\rightarrow RB_{GH}(L\times K),\quad (\alpha,\,\delta)\mapsto \alpha\times^G \delta, 
\end{displaymath}
and
\begin{displaymath}
 RB_{GH}(K)\times RB_H(L)\rightarrow RB_{GH}(K\times L),\quad (\delta,\,\beta)\mapsto \delta\times^H\beta. 
\end{displaymath}
\end{lema}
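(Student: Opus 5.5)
We transport the left action of $RB_G$ on $RB_G\otimes RB_H$ through the isomorphism $\phi$ of Lemma~\ref{eliso}. Concretely, for $\alpha\in RB_G(L)=RB(L,\,G)$ and $\delta\in RB_{GH}(K)=RB(K,\,GH)$, the transported product is
\begin{displaymath}
\phi_{LK}\Big(\alpha\cdot\varphi_K(\delta)\Big).
\end{displaymath}
We must show this equals $\alpha\times^G\delta$; the right action of $RB_H$ is handled symmetrically.

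First, $\varphi_K(\delta)=[G\otimes H]_{G,H,\delta}$. By the description of the bilinear action given just before the statement, with $\beta=G$, $\gamma=H$, $D=G$, $D'=H$ and $f=\delta$, we get
\begin{displaymath}
\alpha\cdot[G\otimes H]_{G,H,\delta}=\big[(\alpha\times^G G)\otimes H\big]_{LG,H,L\delta},
\end{displaymath}
where $\alpha\times^G G\in RB_G(LG)$ is the product in $RB_G$ of $\alpha$ and the identity biset $G\in RB_G(G)=RB(G\times G)$. Applying $\phi_{LK}$ then gives
\begin{displaymath}
(L\times\delta)\circ\big((\alpha\times^G G)\times H\big)\in RB(LK,\,GH).
\end{displaymath}

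It remains to identify this composite with $\alpha\times^G\delta$. By definition of $RB_G$ on arrows, $L\times\delta$ acts on $RB_G(LG)$ through $RB\big((L\times\delta)\times G\big)$, so everything becomes an equality of bisets. We would first check on transitive generators that $\alpha\times^G G$, viewed as an $(LG,G)$-biset, is just $\alpha\times G$ with the $G$ coordinates tied diagonally. The underlying set is $\alpha\times G$, and the diagonal $G$ acts on the $G$-factor of $\alpha$ and on the left of $G$. Then the product $(\alpha\times^G G)\times H$ composed with $L\times\delta$ can be computed using the formula
\begin{displaymath}
X\times^G Y=\biso\circ\bres^{HKGG}_{HK\Delta(G)}\circ\biso(X\times Y),
\end{displaymath}
together with functoriality of $\times$ and of composition (Chapter 8 of \cite{biset}). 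Composing with the identity biset $G$ and restricting to $\Delta(G)$ yields exactly the diagonal product. Equivalently, one can show that both sides are bilinear in $(\alpha,\delta)$, natural in $L$ and $K$, and agree when $\delta=G\times H$ and $\alpha$ is a basis element. The general case then follows because every $\delta$ equals $\delta\circ(G\times H)$ and both expressions commute with post-composition by $L\times\delta$. This naturality reduction is cleaner, and it is what we would do.

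The main obstacle is this last bookkeeping step. It requires justifying that $\times^G$ commutes with composition in the $K$-variable, i.e.
\begin{displaymath}
\alpha\times^G(u\circ\delta)=(L\times u)\circ(\alpha\times^G\delta),
\end{displaymath}
which holds because $\times^G$ is the Green functor product of $RB_G$ and that product is a bilinear pairing (Remark~\ref{ten-fle}). With this, both sides reduce to the case $\delta=G\times H$, where $\phi_{LK}(\alpha\cdot\varphi(G\times H))=\alpha\times^G G\times H$ holds directly.
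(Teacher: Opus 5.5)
Your proposal is correct. Its first half coincides with the paper's proof: both transport the action through $\varphi_K$ and $\phi_{LK}$ of Lemma~\ref{eliso} and arrive at $(L\times\delta)\circ\big((\alpha\times^G G)\times H\big)$. The two arguments differ only in how this element is identified with $\alpha\times^G\delta$.

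The paper does it head-on, for arbitrary $\delta$. It writes down the explicit isomorphism of $(LK,GH)$-bisets $[(l,d),(a,g,h)]\mapsto(la,dgh)$ from the composite over $LGH$ onto $\alpha\times\delta$.

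You instead exploit the representability of $RB_{GH}$. Since $\delta=\delta\circ(G\times H)$, and both sides commute with post-composition by $L\times u$ for $u\in RB(K',K)$, everything reduces to the case $\delta=G\times H$. For the left side this commutation is immediate from its form and functoriality of $\times$; for the right side it is naturality of $\times^G$ in the second variable. In the case $\delta=G\times H$, the equality $(\alpha\times^G G)\times H=\alpha\times^G(G\times H)$ is a direct check on the set $\alpha\times G\times H$.

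What your route buys is that you never compute a composition of bisets over $LGH$. The price is the identity $\alpha\times^G(u\circ\delta)=(L\times u)\circ(\alpha\times^G\delta)$. To deduce it from the fact that the product of $RB_G$ is a bilinear pairing, as you suggest, you must regard $\delta$ as an element of $RB_G(K\times H)$ and $u$ as $u\times H$, via the symmetry identifications. Alternatively, verify it directly with the map $[(l,x),(a,d)]\mapsto(la,[x,d])$, which is a small computation of the same kind as the paper's.

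Naturality in $L$, which you list, is never actually used. Invoking it as well would let you reduce further to $\alpha=G$, making the final check entirely trivial.
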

\pf
We prove the result only for the left action. By the lines preceding the lemma, we have that the action of $RB_G(L)$ in $(RB_G\otimes RB_H)(K)$ is given by
\begin{displaymath}
RB_G(L)\times (RB_G\otimes RB_H)(K)\rightarrow (RB_G\otimes RB_H)(L\times K)
\end{displaymath} 
\begin{displaymath}
(\alpha,\, [\beta\otimes \gamma]_{D,D',f})\mapsto [(\alpha\times^G\beta)\otimes \gamma]_{LD,D', Lf}
\end{displaymath}
for $\alpha\in RB_G(L)$, $\beta\in RB_G(D)$, $\gamma\in RB_H(D')$ and $f\in RB(K,\, DD')$. Next we apply the morphisms of Lemma \ref{eliso} to translate this for
\begin{displaymath}
RB_G(L)\times RB_{GH}(K)\rightarrow RB_{GH}(L\times K).
\end{displaymath}
If $\delta\in RB_{GH}(K)$, following the composition of morphisms, we have
\begin{displaymath}
(\alpha,\,\delta)\mapsto (\alpha, [G\otimes H]_{G,H,\delta})\mapsto[(\alpha\times^GG)\otimes H]_{LG,H,L\delta}\mapsto L\delta\circ ((\alpha\times^GG)\times H)
\end{displaymath}
with the composition made over $LGH$. Let us see that $L\delta\circ ((\alpha\times^GG)\times H)$ is isomorphic to $(\alpha\times^G\delta)$. For simplicity, suppose that $\delta$ is a $(K,\, GH)$-biset, so that $L\times \delta$ is an $(LK,\, LGH)$-biset, and that $\alpha$ is an $(L,\, G)$-biset, so that $\alpha\times^GG$ is an $(LG,\, G)$-biset where the $G$ on the right acts diagonally on the set $\alpha \times G$. With this, it is easy to see that sending an element $[(l,\,d), (a,\,g,\, h)]$ of $(L\times\delta)\circ ((\alpha\times^GG)\times H)$ to $(la,\, dgh)\in \alpha \times \delta$ gives an isomorphism of $(LK,\, GH)$-bisets.
\endpf
\begin{nota}
The notation used in the previous lemma may be confusing if $H=G$. In this case we will use the following notation.
\begin{displaymath}
RB_G(L)\times RB_{GG}(K)\rightarrow RB_{GG}(L\times K),\quad (\alpha,\,\delta)\mapsto \alpha\times^{G_1} \delta, 
\end{displaymath}
and
\begin{displaymath}
 RB_{GG}(K)\times RB_G(L)\rightarrow RB_{GG}(K\times L),\quad (\delta,\,\beta)\mapsto \delta\times^{G_2}\beta. 
\end{displaymath}
If $\alpha$, $\delta$ and $\beta$ are actually sets, this means that $\alpha\times^{G_1} \delta$ is the $LKGG$-set $\alpha\times \delta$ with diagonal action of the first $G$ in $LK\underline{G}G$ and that $\delta\times^{G_2}\beta$ is the $KLGG$-set $\delta\times \beta$ with diagonal action of the second $G$ in $KLG\underline{G}$. 
\end{nota}

\begin{teo}\label{sep}
Let $R$ be a unital commutative ring and $G$ be a finite group. The Green biset functor $RB_G$ is separable if and only if $|G|\in R^{\times}$.
\end{teo}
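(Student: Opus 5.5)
Via Lemma~\ref{eliso}, I identify $RB_G\otimes RB_G$ with $RB_{GG}$. Under this identification, the product $\mu:RB_{GG}\to RB_G$ is computed by Lemma~\ref{acciones}. At $K$ it sends $u\in RB(K,GG)$, i.e. $u=\varphi_K^{-1}([G\otimes G]_{G,G,u})$, to $RB_G(u)(G\times^G G)$. Here $G\times^G G$ is the $GGG$-set $G\times G$ with the third factor acting diagonally. Concretely, $\mu_K(u)$ is $u$ with the two right copies of $G$ restricted to the diagonal $\Delta(G)$:
\[\mu_K(u)=\biso\circ\bres^{KGG}_{K\Delta(G)}(u).\]
The bimodule structure on $RB_{GG}$ is given by Lemma~\ref{acciones-trasl}, namely $\alpha\times^{G_1}\delta\times^{G_2}\beta$. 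Separability then means that $\mu$ has a section $\sigma:RB_G\to RB_{GG}$ of $(RB_G,RB_G)$-bimodules.

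A bimodule morphism out of $RB_G$ is determined by the image of the unit $\varepsilon=\{\bullet\}\in RB_G(\uno)$, because $\alpha=\alpha\times^G\varepsilon$ for every $\alpha$. So $\sigma$ corresponds to an element $e=\sigma_{\uno}(\varepsilon)\in RB(GG)$, with $\sigma_L(\alpha)=\alpha\times^{G_1}e$. The conditions become:
\begin{itemize}
\item[(i)] $\alpha\times^{G_1}e=e\times^{G_2}\alpha$ in $RB(LGG)$ for all $L$ and $\alpha\in RB(LG)$;
\item[(ii)] $\bres^{GG}_{\Delta(G)}e=[\bullet]$ in $RB(G)$.
\end{itemize}
One should double-check that (i) suffices for $\sigma$ to be a bimodule map, by associativity of the diagonal products. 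This is the analogue of a separability idempotent.

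\textbf{Sufficiency.} If $|G|\in R^\times$, I take $e=\frac1{|G|}[(G\times G)/\Delta(G)]$. Restricting to $\Delta(G)$ gives the $G$-set $G$ with conjugation action, which is not the trivial point. So this candidate fails (ii), and the correct normalization must be found.

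Working at the level of $G$-sets, I want a $GG$-set $E$ such that $\alpha\times^{G_1}E\cong E\times^{G_2}\alpha$ naturally in $\alpha$. The set $E=G$ with $(g_1,g_2)x=g_1xg_2^{-1}$ works: the map $(a,x)\mapsto(x,x^{-1}a)$ is an isomorphism from $\alpha\times^{G_1}E$ to $E\times^{G_2}\alpha$, with the actions checked directly. Its restriction to $\Delta(G)$ is $G$ under conjugation, whose class is not $1$. A simple scalar correction cannot fix this, so I would instead use a different $E$. For each subgroup $H$, set $E_H=(G\times G)/\{(h,h):h\in H\}$, with $E_G=(G\times G)/\Delta(G)\cong E$. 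Then I look for a combination $e=\sum_H c_H E_H$ satisfying (i) and (ii), solving for the coefficients using the idempotents $e_H^G$ and marks $\Phi$. If only $E$ satisfies (i) generically, I would instead show that (i) forces $e\in RB(GG)$ to lie in a span where (ii) reduces to an equation like $|G|\,c=1$; that equation is solvable exactly when $|G|\in R^\times$.

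\textbf{Necessity.} I evaluate (i) and (ii) on suitable test objects, taking $L=G$ and $\alpha=\Delta(G)$-type bisets. Then I apply mark homomorphisms to (i), which gives the linear constraints on the coefficients of $e$ in the basis $[GG/S]$. Applying $\Phi$ at the trivial subgroup to (ii), after (i) has been used, should yield $|G|\cdot c=1$ for some $c\in R$.

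\textbf{Main obstacle.} The hard step is describing all solutions $e$ of the centrality condition (i). I expect (i) to force $e$ to be an $R$-multiple of $[G\times G/\Delta(G)]$ plus terms killed by restriction. I would try this first by taking $L=\uno$ and $\alpha=[G/K]$ for all $K$, together with marks.
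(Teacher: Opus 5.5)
\textbf{There is a genuine gap: the proposal does not determine the solutions of your centrality condition (i), and your guess about them is wrong.} Your reduction is fine: separability amounts to an element $e\in RB(GG)$ satisfying (i) with $\Res^{GG}_{\Delta(G)}e=[\bullet]$, which is exactly the criterion the paper uses. The proposal stops where the actual content begins.

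By Proposition~\ref{conmu}, (i) holds if and only if $e=\sum_L\lambda_L[GG/\Delta(L)]$ ($L$ up to conjugacy), i.e.\ $e=\Ind_{\Delta(G)}^{GG}(\alpha)$ for an arbitrary $\alpha\in RB(G)$. All these classes are central, since swapping the two copies of $G$ fixes $GG/\Delta(L)$. None of them is ``killed by restriction''.

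For the ``only if'' direction, the test you mention in passing is the right one ($L=G$, $\alpha$ the identity biset), but marks are the wrong tool. One shows $G\times^{G_1}e=\Ind_{\Delta(G)G}^{GGG}(e)$ and $e\times^{G_2}G=\Ind_{D_{1,3}(G)}^{GGG}(e)$. These send $[GG/H]$ to the basis elements $[GGG/\{(a,a,b)\}]$ and $[GGG/\{(b,a,b)\}]$ respectively, with $(a,b)\in H$. Comparing coefficients in the $R$-basis of $RB(GGG)$ then forces every $H$ in the support of $e$ to be subconjugate to $\Delta(G)$.

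Marks cannot substitute for this, for two reasons:
\begin{enumerate}
\item When $R$ has $|G|$-torsion, which is precisely the case necessity must exclude, the mark map is not injective.
\item Your $L=\uno$ test only says that $p_1(S)$ and $p_2(S)$ are conjugate. That holds for every subgroup $S$ of the twisted diagonal $T=\{(\theta(g),g)\}$, with $\theta$ a non-trivial automorphism of a cyclic group $G$ of order at least $3$, and yet $[GG/T]$ is not central.
\end{enumerate}

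\textbf{The second missing ingredient is the evaluation of (ii) on this centralizer.} By the Mackey formula (double cosets represented by $(g,1)$, $g$ running over conjugacy class representatives, with $\Delta(G)\cap{}^{(g,1)}\Delta(G)=\Delta(C_G(g))$), one gets $\Res^{GG}_{\Delta(G)}\Ind_{\Delta(G)}^{GG}(\alpha)=\Gamma_G\cdot\alpha$. Here $\Gamma_G$ is the $G$-set $G$ under conjugation. So $RB_G$ is separable if and only if $\Gamma_G$ is a unit in $RB(G)$. The correction your candidate needs is multiplication by an element of $RB(G)$ before inducing, not a scalar.

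Necessity follows by applying $\Phi_\uno$, giving $|G|\,\Phi_\uno(\alpha)=1$. This is the equation $|G|c=1$ you anticipated, but it is only available once $e$ is known to be induced from $\Delta(G)$.

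Sufficiency follows from $\Gamma_G=\sum_H|C_G(H)|\,e_H^G$ (since $\Phi_H(\Gamma_G)=|C_G(H)|$). This is invertible because each $|C_G(H)|$ divides $|G|$, and then $e=\Ind_{\Delta(G)}^{GG}(\Gamma_G^{-1})$ works. As written, your sufficiency section produces no element at all.

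Incidentally, your rejection of $\frac1{|G|}[GG/\Delta(G)]$ is only valid for non-abelian $G$. For abelian $G$, $\Gamma_G=|G|\,[G/G]$, and that element does satisfy (ii).
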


By Lemma 5.7 in \cite{hoch}, the functor $RB_G$ is separable if and only if there exists $m\in C(RB_G\otimes RB_G)_{RB_G}(\uno)$ such that $(\mu_{RB_G})_{\uno}(m)=\lae$. To prove the theorem we begin with the following lemma. In what follows  $G$ is a finite group. 

\begin{prop}
\label{conmu}
An element $m\in RB_{GG}(\uno)=RB(GG)$ is in $C(RB_{GG})_{RB_G}(\uno)$ if an only if
\begin{displaymath}
m=\sum_{\substack{L\leqslant G\\L \mathrm{mod}.\,G}}\lambda_L\left[\frac{GG}{\Delta(L)}\right] 
\end{displaymath}
for some $\lambda_L\in R$ or equivalently, if and only if there exists $\alpha\in RB(G)$ such that $m=\Ind_{\Delta(G)}^{GG}(\alpha)$.
\end{prop}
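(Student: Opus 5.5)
The plan is to make the centralizer condition explicit using the actions of Lemma~\ref{acciones-trasl} (with $H=G$, in the notation $\times^{G_1}$, $\times^{G_2}$). An element $m\in RB(GG)$ lies in $C(RB_{GG})_{RB_G}(\uno)$ exactly when, for every finite group $L$ and every $\alpha\in RB_G(L)=RB(LG)$,
\begin{displaymath}
\alpha\times^{G_1}m=m\times^{G_2}\alpha\quad\textrm{in }RB(LGG),
\end{displaymath}
after the obvious identification of $LGG$ with $GGL$ via the symmetry.

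\textbf{Reduction to one test element.} The functor $RB_G$ is isomorphic to the representable functor $RB(\,\_\,,G)$, so it is generated by the identity biset $G\in RB_G(G)=RB(GG)$, which is $[GG/\Delta(G)]$. Both sides of the displayed condition are natural in $\alpha$: for $\varphi\in RB(L',L)$,
\begin{displaymath}
(\varphi\alpha)\times^{G_1}m=(\varphi\times GG)(\alpha\times^{G_1}m),
\end{displaymath}
and similarly on the right. Hence it suffices to check the condition for $L=G$ and $\alpha=[GG/\Delta(G)]$. If this naturality step turns out to be delicate with the symmetry isomorphisms, I would instead compute directly with an arbitrary transitive $\alpha=[LG/U]$. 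The computation below goes through in the same way.

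\textbf{Computation for a transitive $m$.} Take $m=[GG/S]$ with $S\leqslant G\times G$, and view $[GG/\Delta(G)]$ as the $(L,G)$-biset $G$ with $(l,g)x=lxg^{-1}$. Then $\alpha\times^{G_1}m$ is the $(L,G_1,G_2)$-set $G\times GG/S$ with the first $G$ acting diagonally. The point $(1,S)$ has stabilizer
\begin{displaymath}
T_S=\{(a,a,b)\mid (a,b)\in S\}.
\end{displaymath}
Counting, $|GGG:T_S|=|G|\cdot|GG:S|$, so the action is transitive and $\alpha\times^{G_1}m=[GGG/T_S]$. In the same way,
\begin{displaymath}
m\times^{G_2}\alpha=[GGG/T'_S],\qquad T'_S=\{(b,a,b)\mid (a,b)\in S\}.
\end{displaymath}
If $S=\Delta(K)$ for some $K\leqslant G$, then $T_S=T'_S=\{(a,a,a)\mid a\in K\}$, so every combination $\sum_L\lambda_L[GG/\Delta(L)]$ lies in the centralizer.

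\textbf{Converse (expected main obstacle).} Write $m=\sum_S\lambda_S[GG/S]$, with $S$ running over conjugacy classes of subgroups of $G\times G$. The following facts drive the argument.
\begin{itemize}
\item[$\bullet$] The map $S\mapsto T_S$ is injective on conjugacy classes, because $S$ is recovered from $T_S$ by projecting to the coordinates $2,3$.
\item[$\bullet$] The projection of $T_S$ to the coordinates $1,2$ is a diagonal subgroup.
\item[$\bullet$] The projection of $T'_{S'}$ to the coordinates $1,2$ is the swap $\{(b,a)\mid (a,b)\in S'\}$ of $S'$. This is conjugate to a diagonal subgroup only if $S'$ is itself conjugate to some $\Delta(K)$.
\end{itemize}
Now compare coefficients in the basis of transitive $GGG$-sets. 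Suppose $S$ is not conjugate to any diagonal subgroup. Then the basis element $[GGG/T_S]$ appears on the left with coefficient $\lambda_S$. On the right it cannot appear, since $T_S$ is not conjugate to any $T'_{S'}$ with $S'$ non-diagonal, and it is not conjugate to $T'_{\Delta(K)}=T_{\Delta(K)}$ by injectivity. Hence $\lambda_S=0$, and $m$ has the stated form.

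\textbf{The induction form.} Finally, $\Ind_{\Delta(G)}^{GG}[\Delta(G)/\Delta(L)]=[GG/\Delta(L)]$. The elements $[\Delta(G)/\Delta(L)]$ span $RB(\Delta(G))\cong RB(G)$, so the two descriptions of the centralizer coincide.
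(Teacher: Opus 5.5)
Your proof is correct, and the necessity direction follows the paper's argument: test against the identity biset $[GG/\Delta(G)]$, recognize the two sides as $\sum_S\lambda_S[GGG/T_S]$ and $\sum_S\lambda_S[GGG/T'_S]$ (the paper's $\overline{H}$ and its $D_{1,3}(G)$-analogue), and compare coefficients in the basis of transitive $GGG$-sets. Two small differences there: you make explicit the injectivity of $S\mapsto T_S$ on conjugacy classes, which the paper uses tacitly, and you detect diagonality through the projection to coordinates $1,2$ rather than reading off $a=b^{zx^{-1}}$ from a conjugating element.

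The one genuine difference is sufficiency. You use $RB_G\cong RB(\,\_\,,G)$ and the naturality of both actions in $\alpha$ to reduce to the single generator, where $T_{\Delta(K)}=T'_{\Delta(K)}$ is immediate. The paper instead verifies $\alpha\times^{G_1}[GG/\Delta(L)]=[GG/\Delta(L)]\times^{G_2}\alpha$ directly for an arbitrary $KG$-set $\alpha$, by interchanging the two copies of $G$, which fixes $GG/\Delta(L)$. Your Yoneda reduction is valid and somewhat more transparent.
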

\pf
Suppose first that $m\in C(RB_{GG})_{RB_G}(\uno)$. By the previous lemma, this  translates in the following way
\begin{displaymath}
u \times^{G_1} m=m\times^{G_2} u \in RB(LGG),
\end{displaymath}
for every $u\in RB(LG)$ and every finite group $L$. In particular, this should hold for $L=G$ and $u$ being (the class of) $G$ in $RB(GG)$. Suppose now that $m$ is a $GG$-set. Let us see that $G \times^{G_1} m$ is isomorphic to $\bind_{\Delta(G)G}^{GGG}(m)$, omitting the obvious isomorphism between $GG$ and $\Delta(G)G$. This induction is equal to the composition $(GGG)\circ m$, with $GGG$ seen as a $(GGG, GG)$-biset in an obvious way. Then, it is easy to see that the map
\begin{displaymath}
(GGG)\circ m\rightarrow G\times^{G_1}m,\quad [(g_1,\,g_2,\, g_3),\, x]\mapsto (g_1g_2^{-1},\, (g_2,\, g_3)x)
\end{displaymath}
with $x\in m$ is an isomorphism of $GGG$-sets. Extending this observation linearly, we have that $G\times^{G_1}m=\bind_{\Delta (G)G}^{GGG}(m)$ for any $m\in RB(GG)$. In an analogous way we have $m\times^{G_2} G=\bind_{D_{1,3}(G)}^{GGG}(m)$, where 
\begin{displaymath}
D_{1,3}(G)=\{(g,\,g',\,g)\mid (g,g')\in G\}\leqslant GGG.
\end{displaymath}
So, we are looking for elements $m=\sum \lambda_H (GG)/H$  in $RB(GG)$ (we omit the brackets for simplicity) such that
\begin{displaymath}
\sum \lambda_H\bind_{\Delta (G)G}^{GGG}\left(\frac{GG}{H}\right)=\sum \lambda_H\bind_{D_{1,3}(G)}^{GGG}\left(\frac{GG}{H}\right).
\end{displaymath}
Notice that 
\begin{displaymath}
\bind_{\Delta (G)G}^{GGG}((GG)/H)=\bind_{\Delta (G)G}^{GGG}\circ \bind_{H}^{GG}(\{\bullet\})=\bind_{\overline{H}}^{GGG}(\{\bullet\}),
\end{displaymath}
with
\begin{displaymath}
\overline{H}=\{(a,\,a,\,b)\mid (a,\, b)\in H\}\leqslant GGG.
\end{displaymath}
Hence, doing analogous calculations for $\bind_{D_{1,3}(G)}^{GGG}((GG)/H)$, the previous equality of sums implies that for any $H\leqslant GG$ appearing in $m$, there exists $K\leqslant GG$ and an element $(x,\,y,\,z)\in GGG$ such that
\begin{displaymath}
\overline{H}^{(x,\,y,\,z)}=\{(d,\,c,\,d)\mid (c,\, d)\in K\}.
\end{displaymath}
In consequence, for any $(a,\,b)\in H$, we have $a=b^{zx^{-1}}$. Hence, $H$ is a conjugate of a subgroup of $\Delta(G)$.

Now suppose that
\begin{displaymath}
m=\sum_{\substack{L\leqslant G\\L \,\mathrm{mod}.\,G}}\lambda_L\left[\frac{GG}{\Delta(L)}\right] 
\end{displaymath}
and take  a $KG$-set $\alpha$ in $B_G(K)$. Then $\alpha \times^{G_1} (GG/\Delta(L))\in B_{GG}(K)$ is  the $KGG$-set $\alpha\times(GG/\Delta(L))$ with diagonal action of the first $G$ in $KGG$. Taking the isomorphism interchanging the first and the second $G$ in $KGG$ leaves $GG/\Delta (L)$ unchanged and $\alpha\times(GG/\Delta(L))$ becomes a $KGG$-set with diagonal action of the second $G$ in $KGG$. This is clearly isomorphic to $(GG/\Delta(L))\times^{G_2}\alpha$. 
\endpf

\begin{lema} After identification of $RB_G\otimes RB_G$ with $RB_{GG}$, and evaluation at $\uno$, the map
$$\xymatrix@C=4ex{RB(GG)\cong RB_{GG}(\uno)\cong (RB_G\otimes RB_G)(\uno)\ar[rr]^-{(\mu_{RB_G})_\uno}&&RB_G(\uno)\cong RB(G)}$$
is just  $\Res^{GG}_{\Delta (G)}$.
\end{lema}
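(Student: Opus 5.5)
The plan is to trace an element $u\in RB(GG)$ through the chain of identifications. The isomorphism $RB_{GG}(\uno)\cong (RB_G\otimes RB_G)(\uno)$ is the map $\varphi_\uno$ of Lemma~\ref{eliso}. Here an element $u$ of $RB(GG)$ is viewed as an arrow $u\in RB(\uno,\,GG)$ via Remark~\ref{adj-sim}, and
\[
\varphi_\uno(u)=[G\otimes G]_{G,G,u}.
\]
By Lemma~\ref{acciones}, applied to $A=RB_G$, we get
\[
(\mu_{RB_G})_\uno\big([G\otimes G]_{G,G,u}\big)=RB_G(u)\big(G\times^G G\big).
\]
In this formula, $G\in RB_G(G)=RB(GG)$ is the class of the identity biset, that is, the $GG$-set $GG/\Delta(G)$. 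The product $G\times^G G$ is the element of $RB_G(GG)=RB(GGG)$ obtained from $G\times G$ by letting the shifting copy of $G$ act diagonally. By linearity it suffices to treat the case where $u$ is a $GG$-set.

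The first step is to identify $G\times^G G$ concretely. As a $GGG$-set (first two factors for the arguments, last for the shift), its elements are pairs $(x,y)\in G\times G$ with action
\[
(a,b,g)\cdot(x,y)=(axg^{-1},\,byg^{-1}).
\]
This action is transitive, and the stabilizer of $(1,1)$ is $\{(g,g,g)\}$. Hence $G\times^G G\cong GGG/\Delta_3(G)$, where $\Delta_3(G)$ denotes the small diagonal.

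The second step is to compute $RB_G(u)(GGG/\Delta_3(G))=(u\times G)\circ (GGG/\Delta_3(G))$. Here $u\times G$ is viewed as a $(G,\,GGG)$-biset (with $\uno\times G=G$ on the left), and the composition is taken over $GG$ with $GGG/\Delta_3(G)$ viewed as a $(GGG,\,G)$-biset. Reading $GGG/\Delta_3(G)$ as the $(GG\cdot G,\,G)$-biset $\Ind^{GGG}_{GG\times\Delta}$-type set, it is the biset $(GG)\times G$ with $G$ acting on the right diagonally on the $GG$-part and compatibly on the last coordinate. I would write down the explicit map
\[
[(v,g),\,(x,y,z)]\longmapsto \ldots
\]
sending the composite set $(u\times G)\times_{GGG}(GGG/\Delta_3(G))$ to the $G$-set $u$ with action through $\Delta(G)$. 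I would then check that this is a well-defined bijection compatible with the left $G$-actions, in the same style as the isomorphisms in Lemma~\ref{acciones-trasl} and Proposition~\ref{conmu}. The expected outcome is that the composite is isomorphic to $\Res^{GG}_{\Delta(G)}u$, with $\Delta(G)$ identified with $G$. Concretely, a representative can be chosen with $(x,y,z)=(1,1,1)$, and the remaining element of $u$ is acted on by $g$ through $(g,g)$.

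An alternative, cleaner route for this step is to use the biset identity
\[
GGG/\Delta_3(G)=\Ind_{\Delta_3(G)}^{GGG}\circ\Iso\cong \Ind^{GG\times G}_{\Delta(G)\times G}\circ(\text{diagonal }(GG,G)\text{-biset}),
\]
together with the fact that composing with the $(\uno\times G,\,GG\times G)$-biset $u\times G$ amounts to evaluation of $u$ followed by restriction along the diagonal embedding $G\to GG$. In other words, $u\circ \Ind^{GG}_{\Delta(G)}$ read on the correct side is $\Res^{GG}_{\Delta(G)}u$, since restriction and induction are transposes of each other under the identifications of Remark~\ref{adj-sim}.

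The main obstacle I expect is bookkeeping. One must keep track of which copy of $G$ is the shift variable, and of the transpositions implicit in viewing $u\in RB(GG)$ as an arrow $\uno\to GG$. The actual group theory is trivial once the orbit/stabilizer description of $G\times^G G$ is fixed.
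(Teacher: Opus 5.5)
Your route is the paper's: you push $u$ through $\varphi_\uno$ to $[G\otimes G]_{G,G,u}$, apply Lemma~\ref{acciones} to get $RB_G(u)(G\times^G G)$, recognize $G\times^G G\cong GGG/\Delta_3(G)$ (the paper's ``$GG$ with diagonal action of the third $G$''), and compute the composite with $u\times G$; the bijection you leave as ``$\ldots$'' is $[(v,t),(x,y,z)\Delta_3(G)]\mapsto v\cdot(xz^{-1}t^{-1},\,yz^{-1}t^{-1})$, i.e.\ the paper's $[(x,t),(g,g')]\mapsto x(gt^{-1},g't^{-1})$. One slip to fix: that composite is taken over $GGG$ with $GGG/\Delta_3(G)$ a left $GGG$-set (a $(GGG,\uno)$-biset), and your ``over $GG$'' and ``$(GGG,G)$-biset'' come from mixing in the adjoint picture of your last paragraph --- which is itself a valid shortcut, since naturality of the identification in Remark~\ref{adj-sim} turns $(u\times G)\circ\big(GGG/\Delta_3(G)\big)$ into $u\circ\Ind^{GG}_{\Delta(G)}$, and this is visibly $\Res^{GG}_{\Delta(G)}u$.
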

\pf Take a $GG$-biset $X$ in $RB(GG)$. Under the isomorphism of Lemma \ref{eliso}, it maps to $[G\otimes G]_{G,G,X}\in (RB_G\otimes RB_G)(\uno)$, where $X$ is seen as an arrow from $GG$ to $\uno$. By Lemma \ref{acciones}, under $(\mu_{RB_G})_\uno$ this element maps to $RB_G(X)(GG)=RB(X\times G)(GG)$. Here $GG$ is in $RB_G(GG)$ as a $GGG$-set with diagonal action of the third $G$, that is
\begin{displaymath}
(g_1,\,g_2,\,g_3)(g,\,g')=(g_1gg_3^{-1},\,g_2g'g_3^{-1}) 
\end{displaymath}
for  $(g_1,\,g_2,\,g_3)\in GGG$ and $(g,\,g')\in GG$. Since $X$ is a $(\uno,\, GG)$-biset, $X\times G$ is a $(G,\, GGG)$-biset with the right action of $GGG$ given by
\begin{displaymath}
(x,\,t)(g_1,\,g_2,\, g_3)= (x(g_1,\, g_2),\,tg_3)
\end{displaymath} 
for $(g_1,\,g_2,\,g_3)\in GGG$ and $(x,\,t)\in X\times G$. Now, $RB(X\times G)(GG)=(X\times G)\circ (GG)$ with the composition being made over $GGG$. It is easy to see that the map
\begin{displaymath}
(X\times G)\circ (GG)\rightarrow \Res^{GG}_{\Delta(G)}(X),\quad [(x,\, t), (g,\, g')]\mapsto x(gt^{-1}, g't^{-1})
\end{displaymath}
is an isomorphism of $G$-sets (under our assumptions $\Res^{GG}_{\Delta(G)}(X)$ is a right $G$-set, but this action can clearly be written as a left action). Extending linearly we obtain the result for any element of $RB(GG)$.
\endpf

\noindent{\bf Proof of Theorem~\ref{sep}:} By the previous lemma and proposition, the functor $RB_G$ is separable if and only if there exists $\alpha\in RB(G)$ such that the element $m=\Ind_{\Delta (G)}^{GG}(\alpha)\in RB(GG)$ 
 is mapped to the identity of $RB(G)$ by restriction to the diagonal.  Since $\Delta(G)\dom GG/\Delta(G)\cong \{(g,1)\mid g\in Cl(G)\}$, where $Cl(G)$ is a set of representatives of conjugacy classes of $G$, and since $\Delta(G)\cap {^{(g,1)}\Delta(G)}=\Delta\big(C_G(g)\big)$, by the Mackey formula, we have that
$$\Res_G^{GG}\Ind_G^{GG}\alpha=\sum_{g\in Cl(G)}\Ind_{C_G(g)}^G\Res_{C_G(g)}^G\alpha=\Gamma_G\cdot\alpha,$$
where $\Gamma_G=\sum_{g\in Cl(G)}\limits[G/Cl(g)]$. In other words $RB_G$ is separable if and only if $\Gamma_G$ is invertible in $RB(G)$. But the $G$-set $\Gamma_G$ is nothing but the set $G$, acted on by $G$ by conjugation. If $\Gamma_G$ is invertible, with inverse $\alpha$, then in particular 
$$\Phi_\un(\Gamma\cdot\alpha)=|\Gamma_G||\alpha|=\Phi_\un(G/G)=|G/G|=1$$
in $R$, so $|\Gamma_G|=|G|$ is invertible in $R$.\par
Conversely, if $|G|$ is invertible in $R$, then we have idempotents
$$e_H^G=\frac{1}{|N_G(H)|}\sum_{K\leqslant H}|K|\mu(K,H)[G/K]$$
in the ring $RB(G)$, and 
$$\Gamma_G=\sum_{\substack{H\leqslant G\\H \,\mathrm{mod.}\, G}}\Gamma_G\cdot e_H^G=\sum_{\substack{H\leqslant G\\H \,\mathrm{mod.}\, G}}|(\Gamma_G)^H|e_H^G=\sum_{\substack{H\leqslant G\\H \,\mathrm{mod.}\, G}}|C_G(H)|e_H^G.$$
 Now since $|C_G(H)|$ is also invertible in $R$ for all $H\leqslant G$, we can set 
$$\alpha=\sum_{\substack{H\leqslant G\\H \,\mathrm{mod.}\, G}}\frac{1}{|C_G(H)|}e_H^G$$
in $RB(G)$, and this element $\alpha$ is inverse to $\Gamma_G$. So $RB_G$ is separable.\endpf

\section{Link with evaluations}\label{evaluations}

Let $A$ be a Green biset functor, and $M$ be an $(A,A)$-bimodule. Then (see Section~2 of~\cite{greenfields}), for any finite group $G$, the evaluation $A(G)$, endowed with the ``dot'' product, becomes a ring, and the evaluation $M(G)$ becomes an $\big(A(G),A(G)\big)$-bimodule. It is then natural to try to compare the evaluations $\CH H^n(A,M)$ of the Hochschild cohomology functors of $A$ with values in $M$, and the ``ordinary'' Hochschild cohomology groups $HH^n\big(A(G), M(G)\big)$, for a given finite group $G$. Similarly, it is natural to compare the property of being separable for the Green biset functor $A$ and being separable for the ring $A(G)$.\par
In this section, we give some examples of such comparisons. In particular, we show  (Theorem~\ref{burnside} for $R=\Z$) that if $G$ is a non-trivial finite group, then its Burnside ring $B(G)$ is not separable.  In contrast, the Green biset functor $B$ is separable, since the multiplication morphism $B\otimes B\to B$ is an isomorphism, as $B$ is the identity object for the tensor product of biset functors. However, we also show  (Proposition~\ref{burnside2} for $R=\Z$) that the first Hochschild cohomology group $HH^1\big(B(G),B(G)\big)$ is always trivial.\medskip\par


\begin{teo} \label{burnside}Let $R$ be a unital commutative ring and $G$ be a finite group. Then $RB(G)$ is a separable $R$-algebra if and only if the order of $G$ is invertible in $R$
\end{teo}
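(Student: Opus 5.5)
If $|G|$ is invertible in $R$, I will show that $RB(G)$ is a finite product of copies of $R$. Such an algebra is separable: the element $\sum_H e_H\otimes e_H$ is a separability idempotent.

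For this direction I use the idempotents $e_H^G$, $H\in[s_G]$, recalled in Section~\ref{prelim}. They are orthogonal, sum to $1$, and satisfy $e_H^G\alpha=\Phi_H(\alpha)e_H^G$. Hence $e_H^G\,RB(G)=Re_H^G$. Each $e_H^G$ is nonzero, since $\Phi_H(e_H^G)=1$, and this also shows $Re_H^G\cong R$. So
$$RB(G)\cong\prod_{H\in[s_G]}R.$$
Then $m=\sum_H e_H^G\otimes e_H^G\in RB(G)\otimes_R RB(G)$ works. It is central in the bimodule sense, because $\alpha m=\sum_H\Phi_H(\alpha)e_H\otimes e_H=m\alpha$. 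It also satisfies $\mu(m)=\sum_H e_H=1$.

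Conversely, suppose $RB(G)$ is separable and take a separability idempotent $m=\sum_{K,L}\lambda_{K,L}[G/K]\otimes[G/L]$. Write $\alpha=\sum x_K[G/K]$. I will specialize through the ring homomorphisms $\Phi_H$, using the following standard fact: if $A$ is separable over $R$ and $\phi:A\to R$ is a ring map, then $(\phi\otimes\mathrm{id})(m)\in A$ behaves like a "delta" at $\phi$. Precisely, applying $\phi\otimes\mathrm{id}$ to $am=ma$ gives
$$\phi(a)\,e_\phi=e_\phi\, a,\qquad e_\phi:=(\phi\otimes\mathrm{id})(m),$$
and applying $\phi\otimes\phi$ composed with multiplication gives $\phi(e_\phi)=1$. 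So $e_\phi\in RB(G)$ satisfies
$$a\,e_\phi=\phi(a)\,e_\phi\ \text{ for all } a,\qquad \Phi_\un(e_\phi)=1$$
when $\phi=\Phi_\un$.

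Take $\phi=\Phi_\un$ and write $e=e_\phi=\sum_K x_K[G/K]$. Choosing $a=[G/\un]$ gives $[G/\un]e=|G|e$. Now $[G/\un][G/K]=|G/K|[G/\un]$, so the left side equals $\big(\sum_K x_K|G:K|\big)[G/\un]=\Phi_\un(e)[G/\un]=[G/\un]$. Hence
$$|G|\,e=[G/\un].$$
Comparing the coefficients of $[G/\un]$ in the basis $\{[G/K]\}$ gives $|G|x_\un=1$, so $|G|\in R^\times$.

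The only real obstacle is making sure the specialization step is legitimate over an arbitrary ring $R$. The separability idempotent lives in $RB(G)\otimes_R RB(G)$, which is free with basis $[G/K]\otimes[G/L]$, so $\phi\otimes\mathrm{id}$ is well defined. The remaining work is to check carefully that $\phi(a)e_\phi=e_\phi a$ follows from $am=ma$, and that the coefficient comparison is valid because $RB(G)$ is free on the transitive $G$-sets. Both of these seem routine.
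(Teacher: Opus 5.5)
Your proof is correct. The direction ``$|G|$ invertible $\Rightarrow$ separable'' is exactly the paper's: you take $\sum_{H\in[s_G]}e_H^G\otimes e_H^G$ and check centrality and $\mu(m)=[G/G]$. The product decomposition $RB(G)\cong\prod_{H\in[s_G]}R$ and the claim $\Phi_H(e_H^G)=1$ are true, but neither is needed for this check.

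The direction ``separable $\Rightarrow$ $|G|$ invertible'' is organized differently.
\begin{itemize}
\item The paper multiplies the separability element $u$ by $[G/\un]$. It notes that $[G/\un]\cdot u$ lies in the span of the $[G/\un]\otimes[G/K]$, while $u\cdot[G/\un]$ lies in the span of the $[G/H]\otimes[G/\un]$. Using the basis of the tensor square, it concludes $[G/\un]\cdot u=r\,[G/\un]\otimes[G/\un]$, then applies $\mu$ to get $r|G|=1$.
\item You push $m$ forward along $\Phi_\un\otimes\mathrm{id}$. This gives an element $e$ with $ae=\Phi_\un(a)e$ and $\Phi_\un(e)=1$. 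The single relation $|G|e=[G/\un]\cdot e=\Phi_\un(e)[G/\un]$ then forces $|G|e=[G/\un]$, and you finish by comparing coefficients in the basis of $RB(G)$.
\end{itemize}
The two computations are linked by the identity $[G/\un]\cdot m=[G/\un]\otimes e$, since $[G/\un]a=\Phi_\un(a)[G/\un]$ for all $a$. So the paper's conclusion is equivalent to $e=r[G/\un]$, and the underlying mechanism is the same absorbing element $[G/\un]$.

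What your route buys:
\begin{itemize}
\item It isolates a general lemma: in a commutative separable $R$-algebra, every character $\phi$ is split off by an idempotent $e_\phi$ with $a e_\phi=\phi(a)e_\phi$ and $\phi(e_\phi)=1$.
\item This makes the obstruction conceptual: $e_{\Phi_\un}$ would have to be $[G/\un]/|G|$.
\item It only needs the basis of $RB(G)$, not that of the tensor square. In fact $\Phi_\un\otimes\mathrm{id}$ is well defined by $R$-linearity alone, so your freeness remark is unnecessary.
\end{itemize}
The paper's version is a more hands-on computation that uses both sides of the centrality condition and $\mu$ directly, with no map to $R$.

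One cosmetic point: the notation ``$\alpha=\sum x_K[G/K]$'' introduced at the start of the converse is never used. The $x_K$ are later reused for the coefficients of $e$.
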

\pf Suppose first that the $R$-algebra $RB(G)$ is separable. Equivalently, there exists an element $u\in RB(G)\otimes_R RB(G)$ such that $\alpha\cdot u=u\cdot\alpha$ for any $\alpha\in RB(G)$, and $\mu_{RB(G)}(u)=[G/G]$, where $\mu_{RB(G)}:RB(G)\otimes_R RB(G)\to RB(G)$ is the product map. Since $\mu_{RB(G)}$ is a morphism of $\big(RB(G),RB(G)\big)$-bimodules, we have in particular that
$$\mu_{RB(G)}\big([G/\un]\cdot u\big)=[G/\un]\cdot \mu_{RB(G)}(u)=[G/\un]\cdot [G/G]=[G/\un].$$
Moreover $[G/\un]\cdot u=u\cdot [G/\un]$. Now the elements $[G/H]\otimes [G/K]$, for $H$ and $K$ in $[s_G]$, form an $R$-basis of $RB(G)\otimes_R RB(G)$. Since $[G/\un]\cdot [G/H]=|G:H|[G/\un]$ for any $H\in [s_G]$, the product $[G/\un]\cdot u$ is a linear combination of elements $[G/\un]\otimes [G/K]$, for $K\in [s_G]$. But it is equal to $u\cdot [G/\un]$, which similarly is a linear combination of elements $[G/H]\otimes [G/\un]$, for $H\in[s_G]$. It follows that $[G/\un]\cdot u=r\,[G/\un]\otimes [G/\un]$, for some $r\in R$. Then
$$[G/\un]=\mu_{RB(G)}\big([G/\un]\cdot u\big)=r\,\mu_{RB(G)}\big([G/\un]\otimes [G/\un]\big)=r\,[G/\un]^2=r|G|[G/\un].$$
Then $r|G|=1$, so $|G|$ is invertible in $R$, as was to be shown.
\medskip\par
Conversely, suppose that the order of $G$ is invertible in $R$, and consider the element
$$u:=\sum_{H\in [s_G]}e_H^G\otimes e_H^G$$
of $RB(G)\otimes RB(G)$. Then for any finite $G$-set $X$
\begin{align*}
X\cdot u&=\sum_{H\in [s_G]}(X\cdot e_H^G)\otimes e_H^G=\sum_{H\in [s_G]}|X^H|e_H^G\otimes e_H^G\\
&=\sum_{H\in [s_G]}e_H^G\otimes |X^H|e_H^G=\sum_{H\in [s_G]}e_H^G\otimes (e_H^G\cdot X)=u\cdot X.
\end{align*}
Moreover, the image of $u$ by the product map $\mu_{RB(G)}:RB(G)\otimes_R RB(G)\to RB(G)$ is equal to
$$\sum_{H\in [s_G]}(e_H^G)^2=\sum_{H\in [s_G]}e_H^G=[G/G].$$
It follows that $u$ is a {\em separability element} of $RB(G)\otimes RB(G)$, so $RB(G)$ is separable. This completes the proof.\endpf

\begin{prop} \label{burnside2} Let $R$ be a unital commutative ring and $G$ be a finite group. If $R$ has no $|G|$-torsion, then the only derivation of $RB(G)$ is zero, i.e. $HH^1\big(RB(G),RB(G)\big)=0$.
\end{prop}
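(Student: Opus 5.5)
Let $d:RB(G)\to RB(G)$ be an $R$-linear derivation, i.e. $d(\alpha\beta)=\alpha\, d(\beta)+d(\alpha)\,\beta$; since $RB(G)$ is commutative, every inner derivation is zero, so it suffices to show $d=0$. The plan is to use the ghost (mark) morphism. Write $\Phi=(\Phi_H)_{H\in[s_G]}:RB(G)\to \prod_{H\in[s_G]}R$. By the classical Gluck--Yoshida description, after tensoring with $\Z[1/|G|]$ the idempotents $e_H^G$ exist, and $\Phi$ is injective whenever $R$ has no $|G|$-torsion. Indeed, $\Phi$ is given by an integral triangular matrix (mark table) with diagonal entries $|N_G(H):H|$, all dividing $|G|$, so its determinant divides a power of $|G|$; hence $\Phi(x)=0$ forces $|G|^k x=0$, and therefore $x=0$.

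Let $N=|G|$. In $RB(G)$ itself we have the integral elements $f_H:=|N_G(H)|\,e_H^G=\sum_{K\leqslant H}|K|\mu(K,H)[G/K]$. By the characterization $e_H^G\alpha=\Phi_H(\alpha)e_H^G$, which is an identity holding over $\Z[1/N]$ and hence, after clearing denominators, an integral identity, they satisfy $f_H\cdot\alpha=\Phi_H(\alpha)f_H$ and $f_H^2=|N_G(H)|f_H$. They also satisfy $\sum_H f_H/|N_G(H)|=1$, so $N\cdot 1=\sum_H c_H f_H$ with integers $c_H=N/|N_G(H)|$. Moreover, $\Phi_K(f_H)=0$ for $K$ not conjugate to $H$, while $\Phi_H(f_H)=|N_G(H)|$.

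The key step is to apply $d$ to $f_H^2=n_Hf_H$, where $n_H=|N_G(H)|$. This gives $2f_H\,d(f_H)=n_H\,d(f_H)$. Applying $\Phi_K$, we get $(2\Phi_K(f_H)-n_H)\,\Phi_K(d(f_H))=0$. For $K\not\sim H$ this reads $-n_H\Phi_K(d f_H)=0$, and for $K\sim H$ it reads $n_H\Phi_H(d f_H)=0$. Since $R$ has no $|G|$-torsion and $n_H$ divides $|G|$, in both cases $\Phi_K(d f_H)=0$. Injectivity of $\Phi$ then yields $d(f_H)=0$ for all $H$.

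Next, for arbitrary $\alpha$, apply $d$ to $f_H\alpha=\Phi_H(\alpha)f_H$. This gives $f_H\,d(\alpha)=\Phi_H(\alpha)d(f_H)-d(f_H)\alpha=0$. Applying $\Phi_H$ yields $n_H\Phi_H(d\alpha)=0$, hence $\Phi_H(d\alpha)=0$ for every $H$, and so $d(\alpha)=0$ by injectivity. This finishes the argument (one could alternatively observe $d(1)=0$ and use $N\cdot1=\sum c_Hf_H$, but this is not needed).

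The main obstacle is justifying the integrality facts cleanly: that $f_H$ lies in $RB(G)$ with $f_H\alpha=\Phi_H(\alpha)f_H$ and $f_H^2=n_Hf_H$ for any $R$, and that $\Phi$ is injective under the no-$|G|$-torsion hypothesis. I would derive both from the $\Z$-case. There, $\Phi$ is injective with image containing $N\prod\Z$, and the identities hold in $B(G)\subset\Q B(G)$. Base change to $R$ then transports them, and injectivity over $R$ follows because the cokernel of $\Phi_\Z$ is killed by a power of $N$, so $\ker(\Phi_R)$ is killed by that power, which is zero when $R$ has no $|G|$-torsion.
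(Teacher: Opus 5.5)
Your proof is correct and follows essentially the paper's route: you differentiate an integral quasi-idempotent relation and use the absence of $|G|$-torsion to get $d=0$ on these elements, then extend to all of $RB(G)$. The relation is your $f_H^2=|N_G(H)|f_H$ versus the paper's $v_H^2=|G|v_H$ with $v_H=|G|e_H^G$. The only other difference is in how you conclude. You invoke injectivity of the mark map $\Phi_R$, justified via the triangular table of marks. The paper instead uses $v_H\,d(v_H)=\Phi_H\big(d(v_H)\big)v_H$ to get $d(v_H)=0$ directly, and then the identity $|G|\alpha=\sum_H\Phi_H(\alpha)v_H$, which is itself the quickest proof of that injectivity.
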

\pf For $H\in[s_G]$, let $v_H=|G|e_H^G$. Then $v_H\in RB(G)$, and $v_H\cdot \alpha=\Phi_H(\alpha)\,v_H$ for any $\alpha\in RB(G)$, where $\Phi_H:RB(G)\to R$ is the unique $R$-linear map sending a finite $G$-set $X$ to the number $|X^H|$ of $H$-fixed points on $X$. In particular $(v_H)^2=|G|v_H$, since $\Phi_H(v_H)=|G|$.\par
Let $d:RB(G)\to RB(G)$ be a derivation. Then
$$d\big((v_H)^2\big)=2v_H\cdot d(v_H)=2\Phi_H\big(d(v_H)\big)\,v_H=|G|\,d(v_H).$$
Applying $\Phi_H$ to the last equality, we get that 
$$2\Phi_H\big(d(v_H)\big)\,\Phi_H(v_H)=2|G|\,\Phi_H\big(d(v_H)\big)=|G|\,\Phi_H\big(d(v_H)\big).$$
It follows that $|G|\Phi_H\big(d(v_H)\big)=0$, so $\Phi_H\big(d(v_H)\big)=0$ since $R$ has no $|G|$-torsion. Hence $|G|d(v_H)=2\Phi_H\big(d(v_H)\big)\,v_H=0$, so $d(v_H)=0$, since $RB(G)$ has no $|G|$-torsion either. Now if $\alpha\in RB(G)$, then $|G|\alpha$ is a linear combination of the elements $v_H$, for $H\in[s_G]$. It follows that $|G|d(\alpha)=0$, so $d(\alpha)=0$ since $RB(G)$ has no $|G|$-torsion. Hence $d=0$, as was to be shown.  \endpf
\begin{rem} If $R$ has $|G|$-torsion, then $RB(G)$ may admit non-trivial derivations: For example, if $G$ has prime order $p$, and $R$ has characteritic $p$, then the algebra $RB(G)$ is isomorphic to $R[X]/(X^2)$, which has a non-trivial derivation sending $aX+b$ to $aX$, for $a,b\in R$.
\end{rem}


\centerline{\rule{5ex}{.1ex}}
\begin{flushleft}
Serge Bouc, CNRS-LAMFA, Universit\'e de Picardie, 33 rue St Leu, 80039, Amiens, France.\\
{\tt serge.bouc@u-picardie.fr}\vspace{1ex}\\
Nadia Romero, DEMAT, UGTO, Jalisco s/n, Mineral de Valenciana, 36240, Guanajuato, Gto., Mexico.\\
{\tt nadia.romero@ugto.mx}
\end{flushleft}

\end{document}